\newtheorem{theorem}{Theorem}[section]
\newtheorem{lemma}[theorem]{Lemma}
\newtheorem{proposition}[theorem]{Proposition}
\newtheorem{corollary}[theorem]{Corollary}
\theoremstyle{definition}
\newtheorem{definition}[theorem]{Definition}
\theoremstyle{remark}
\newtheorem{remark}[theorem]{Remark}
\newtheorem{remarks}[theorem]{Remarks}
\numberwithin{equation}{section}
\begin{document}

\setcounter{page}{1}

\title{A note on weak positive matrices, finite mass measures  and hyponormal weighted shifts}

\author[H. El-Azhar, K. Idrissi, E.H. Zerouali]{H. El-Azhar $^{1*}$, K. Idrissi $^1$, \MakeLowercase{and} E.H. Zerouali $^{1}$}

\address{$^{1}$Center of mathematical research of Rabat, P.O. box 1014, Department of Mathematics, Faculty of
sciences, Mohammed V university in Rabat, Rabat, Morocco.}
\email{\textcolor[rgb]{0.00,0.00,0.84}{elazharhamza@gmail.com;
kaissar.idrissi@gmail.com; zerouali@fsr.ac.ma}}

\subjclass[2010]{Primary 47B37; Secondary 47B20, 44A60, 47B38,
47A63, 47A05.}

\keywords{Subnormal operators, $k$-hyponormal operators,
$k$-positive matrices, weighted shifts, perturbation, moment
problem.}

\begin{abstract}
We study the class of Hankel matrices for which the $k\times
k$-block-matrices are positive semi-definite. We prove that a
$k\times k$-block-matrix has non zero determinant if and only if all
$k\times k$-block matrices have non zero determinant. We use this
result to extend  the notion of propagation phenomena to
$k$-hyponormal weighted shifts. Finally we give a study on
invariance of $k$-hyponormal weighted shifts  under one rank
perturbation.
\end{abstract}
\maketitle
\section{Introduction}

Let $\mathcal{H}$ be a complex Hilbert space and let
$\mathcal{L(H)}$ be the algebra of bounded operators on
$\mathcal{H}$. We denote by $[T,S]:= TS-ST$ the commutator of $S$
and $T$ in $\mathcal{L(H)}$. An operator $T\in\mathcal{L(H)}$ is
said to be normal if $[T^*,T]=0$, to be hyponormal if $[T^*,T]\geq
0$ and to be subnormal if $T=N_{|\mathcal{H}}$, where $N$ is a
normal operator on some Hilbert space $\mathcal{K} \supseteq
\mathcal{H}$.

The concepts of subnormal and hyponormal operators were introduced
by Paul R. Halmos  in \cite{Hal}. The first notion, hyponormal,
reflects the geometric nature of normality  with the corresponding
implications in terms of  positive matrices; while subnormal is
intimately related to the notion of analyticity for complex
functions, through the restriction of the functional calculus to
invariant subspaces.

In order to establish a bridge between operator theory and matrix
theory, we recall the Bram-Halmos criterion for subnormality
\cite{Bra, Con}, which says that an operator $T$ is subnormal if and
only if
  \begin{equation}\label{1.1}
  \sum_{i,j\leq k} \langle T^ix_j,T^jx_i\rangle \geq 0 \qquad k\geq
  0,
  \end{equation}
for any $x_0, x_1, \ldots,x_k \in \mathcal{H}$. An application of
the Choleski algorithm for operator matrices shows that \eqref{1.1}
is equivalent to the positivity test

  \begin{equation}\label{1.2}
  M_k(T):=\left( \begin{matrix}
 [T^*,T]     &[T^{*2},T]      &\ldots &[T^{*k},T]\\
 [T^*,T^2]  &[T^{*2},T^2]    &\ldots &[T^{*k},T^2]\\
 \vdots      &\vdots           &\ddots &\vdots\\
 [T^*,T^k]  &[T^{*2},T^k]    &\ldots &[T^{*k},T^k]
 \end{matrix} \right) \geq 0 \qquad  k\geq0.
  \end{equation}

To illustrate and to study the gap between subnormal and hyponormal
operators A. Athavale \cite{Ath} introduced the classes of
$k$-hyponormal operators as follows, an operator
$T\in\mathcal{L(H)}$ is  $k$-hyponormal if $M_k(T)\geq0$. Clearly
$$ T \mbox{   is subnormal } \Leftrightarrow   T    \mbox{ is } k-\mbox{hyponormal for every }
 k\in\mathbb{N}.$$  Moreover, 
$$
(k+1)\textrm{-hyponormal} \Rightarrow k\textrm{-hyponormal}
\Rightarrow 1\textrm{-hyponormal} \Leftrightarrow
\textrm{hyponormal}.
$$
Weighted shifts, defined below,   provide several  examples and counter examples in
operator theory and hence are an important motivation in the
analysis  of operators.  Given a bounded sequence of positive
numbers $\alpha\equiv\{\alpha_n\}_{n\ge 0}$ (called weights), the
unilateral weighted shift $W_\alpha$ associated with $\alpha$ is the
bounded operator on $\ell^2(\mathbb{N})$ defined by $W_\alpha
e_n:=\alpha_n e_{n+1}$ for every $n\geq0$, where
$\{e_n\}_{n=0}^\infty$ is the canonical orthonormal basis for
$\ell^2$; the moments of $\alpha$ are defined by $\gamma_0:=1$,
$\gamma_{n+1}:=\alpha_n^2\gamma_n (n\geq 0)$. It is straightforward
to check that $W_\alpha$ can never be normal, and that $W_\alpha$ is
hyponormal if and only if $\alpha_n\leq \alpha_{n+1}$ for all $n\geq
0$.

The Stieltjes moment problem associated with a given sequence
$\{\gamma_{n}\}_{n\ge 0}$ entails finding a positive Borel measure
$\mu$ supported in  $\mathbb{R_+}$ such that
 \begin{equation}\label{i.0}
\gamma_{n}=\int_{\mathbb{R}_+} {t}^nd\mu\: \: \:  \mbox{ for every }
n \ge 0.
\end{equation}

When the moment problem owns a solution $\mu$, then $\mu$  is said
to be a representing measure of the moment sequence $\gamma_n$.  The
well known Berger theorem says that a weighted shift $W_\alpha$ is
subnormal precisely when the sequence of its moments is a moment
sequence of a positive measure supported in $[0,\|W_\alpha\|]$.

 A description of subnormality for an abstract operator $T$ in
terms of weighted shifts can be found in \cite{Lam}. Namely, a one
to one operator $T$ is  subnormal if and only if, for each $h\neq 0$
in $\mathcal{H}$, the weighted shift associated  with the weight
sequence $\{\| T^{n+1}h \| / \| T^{n}h \| \}$ is subnormal.

J. Stampfli in \cite{Sta} (see also \cite{BRZ}) showed that for
subnormal weighted shifts $W_\alpha$, a propagation phenomenon
occurs which forces the flatness of $W_\alpha$ whenever two equal
weights are present. That is, if $\alpha_k=\alpha_{k+1}$  for some
$k\geq0$, then $\alpha_n=\alpha_{n+1}$  for every  $n\geq1$. Later,
R. Curto proved, in \cite{Cur}, that the above result remains valid
for 2-hyponormal weighted shifts. Our main goal in this note is to
generalize the propagation phenomena in order to study the gap
between different classes of $k$-hyponormal weighted shifts. To this
aim we introduce the notion of $k$-positive matrices (or sequences),
and we investigate this concept to exhibit some useful results.

This paper is organized as follows. We define in Section 2 the concept of  
$k$-positive matrices, and we give some of their  basic properties.
Section 3 is devoted to the statement of  the main results. In
Section 4 we give an elementary proof for a result due to
Curto-Fialkow. That is, a $k$-positive matrix has a $k\times
k$-sub-matrix with zero determinant if and only if all $k\times
k$-sub-matrices have zero determinant. We devote Section 5 to a
characterization of finite combinations of Dirac measures on
$\mathbb{R}_+$ in terms of moment. In the last section we study the
invariance of $k$-hyponormal weighted shifts  under one rank
perturbation, and a simple algorithm to calculate the stable
perturbation intervals.

\section{$k$-positive Hankel matrices}
Given a sequence of non negative numbers $\gamma= \{\gamma_n\}_{n\ge
0}$, the associated Hankel matrix is built as follows
  \begin{equation*}
M_\gamma :=(\gamma_{i+j})_{i,j}=\left( \begin{matrix}
\gamma_0     &\gamma_1  &\gamma_2    &\gamma_3 \\
\gamma_1  &\gamma_2 &\gamma_3   &\ldots\\
\gamma_2 & \gamma_3 &\dots &\dots \\
\gamma_3 & \vdots  &\ddots &\ddots
\end{matrix} \right).
\end{equation*}

For $k,n\in\mathbb{N}$,  we denote by $[M_\gamma]^n_k$ the Hankel
$(k+1)\times (k+1)$-sub-matrix

 \begin{equation}\label{1}
[M_\gamma]^n_k=\left( \begin{matrix}
\gamma_n     &\gamma_{n+1}      &\ldots     &\gamma_{n+k} \\
\gamma_{n+1}  &\gamma_{n+2}    &\ldots      &\gamma_{n+k+1}\\
\vdots      &\ddots           &\ddots       &\vdots\\
\gamma_{n+k}        &\gamma_{n+k+1} &\ldots &\gamma_{n+2k}
\end{matrix} \right).
\end{equation}

 \begin{definition}
A Hankel matrix $M_\gamma$, or a sequence $\gamma$, is said to be
$k$-positive if for every $n\in \mathbb{N}$, the  $(k+1)\times
(k+1)$-sub-matrix  $[M_\gamma]_k^n$ is  positive semi-definite.
\end{definition}

Clearly  $M_\gamma$  is $k$-positive means that $\langle M_\gamma
x,x\rangle\geq 0$ for every  $x=\sum\limits_{i=0}^kx_{i}e_{n+i}$ and
$n\in \mathbb{N}$. Also, it is easy to see that the set of
$k$-positive matrices, denoted by $\mathcal{C}_+^k$, is a convex
cone, and that $\mathcal{C}_+^{k+1} \subset \mathcal{C}_+^{k},\text{
for all } k\in \mathbb{N}$.

 Further immediate properties and examples are given in the next
 remark:
\begin{remark}\label{rq}\qquad

    \begin{enumerate}
        \item $\mathcal{C}_+^0 $ is the set of all matrices with non negative entries.
        \item $\gamma=\{\gamma_n\}_{n\ge 0}$ is $1$-positive if and only if $\gamma$
        is non-negative and log-convex.
        \item \label{3} Let $\mu$ be a positive finite measure such that $\textrm{Supp}(\mu)\subset \mathbb{R}_+$ and $\mathbb{R}[X]\subset L^1(\mathbb{R}_+,\mu)$.
         By Stieltjes's Theorem \cite[page 76]{akh}  for every  $k\in\mathbb{N}$,
        the Hankel matrices $(\gamma_{i+j})_{0\leq i,j\leq k}$ and $(\gamma_{i+j+1})_{0\leq i,j\leq k}$
        are positives semi-definite. In other words  $\gamma$ is $k$-positive for all $k\in
     \mathbb{N}$.
    \end{enumerate}
\end{remark}

\section{main results}
From the log-convexity of $k$-positive sequences, we deduce the next
useful remark
   \begin{proposition}\label{1 positive}
    Let $M\in \mathcal{C}_+^k$.  If $\gamma_{n_0}=0$, for some $n_0\in \mathbb{N}$, then  $\gamma_{n}=0$ for every $n \ge 1$.
\end{proposition}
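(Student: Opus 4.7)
The plan is to reduce the statement to the case $k=1$ and then exploit log-convexity. Since $\mathcal{C}_+^{k+1}\subset \mathcal{C}_+^k$ for every $k$, and the statement becomes vacuous (or trivially false) for $k=0$, I may and will assume $k\ge 1$, in which case $M_\gamma\in \mathcal{C}_+^k\subset \mathcal{C}_+^1$. By Remark \ref{rq}(2), this means $\gamma$ is non-negative and log-convex, i.e.
\[
\gamma_n^2 \le \gamma_{n-1}\gamma_{n+1} \qquad (n\ge 1).
\]
So the entire argument reduces to propagating a single zero through this inequality.

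First I would handle forward propagation. Suppose $\gamma_{n_0}=0$. Applying log-convexity at index $n_0+1\ge 1$ gives $\gamma_{n_0+1}^2 \le \gamma_{n_0}\gamma_{n_0+2}=0$, whence $\gamma_{n_0+1}=0$. Iterating this step shows $\gamma_n=0$ for every $n\ge n_0$.

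Next, backward propagation: if $n_0\ge 2$, log-convexity at index $n_0-1\ge 1$ gives $\gamma_{n_0-1}^2\le \gamma_{n_0-2}\gamma_{n_0}=0$, so $\gamma_{n_0-1}=0$. Repeating this down one index at a time, the process terminates at $n=1$ (log-convexity is stated only for $n\ge 1$), giving $\gamma_n=0$ for all $1\le n\le n_0$. Combined with the forward step, this settles the case $n_0\ge 1$.

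The only borderline case is $n_0=0$: then log-convexity at $n=1$ yields $\gamma_1^2\le \gamma_0\gamma_2=0$, so $\gamma_1=0$, and the forward propagation from index $1$ finishes the proof. I do not expect any real obstacle here; the only mild subtlety is remembering that log-convexity starts at $n=1$ so backward propagation cannot reach $\gamma_0$ (which is precisely why the conclusion is only for $n\ge 1$).
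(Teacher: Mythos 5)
Your proof is correct and follows exactly the route the paper intends: the paper gives no explicit proof but introduces the proposition with ``From the log-convexity of $k$-positive sequences, we deduce\dots'', i.e.\ precisely your reduction via $\mathcal{C}_+^k\subset\mathcal{C}_+^1$ to the inequality $\gamma_n^2\le\gamma_{n-1}\gamma_{n+1}$ and the forward/backward propagation of a zero. Your remark that the case $k=0$ must be excluded (and that backward propagation stops at $n=1$, explaining why $\gamma_0$ is exempt) is a correct and worthwhile clarification of the statement.
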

Proposition \ref{1 positive} states  that a propagation phenomena
occurs,  in the sense that if  a term of our sequence is zero, then
almost all the sequence is forced to be zero. The general case of
higher order  propagation was established by R. Curto and L. Fialkow
in \cite[Proposition 5.13]{CF}. Our  first contribution in this
section is to provide an elementary proof of this fact,  based on a
version of block matrices determinants.
 \begin{theorem}[Propagation phenomena for $k$-positive matrices]\label{main}
    Let $M\in \mathcal{C}^k_+$ be such that there exists an integer $ n_0\geq 0$ satisfying
    $\det([M_\gamma]^{n_0}_{k-1})=0$. Then $\det([M_\gamma]^{n}_{k-1})=0$, for all $n\geq 1$.
\end{theorem}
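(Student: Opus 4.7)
My plan is to show that the sequence $D_n := \det([M_\gamma]^n_{k-1})$ is non-negative and log-convex, so that a single zero value forces vanishing throughout $n\ge 1$. Non-negativity is immediate: since $M\in\mathcal{C}_+^k$, the $(k+1)\times(k+1)$ matrix $[M_\gamma]^n_k$ is positive semi-definite, and $[M_\gamma]^n_{k-1}$ is its $k\times k$ principal submatrix, so $D_n\ge 0$ for every $n\ge 0$.

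For log-convexity I would apply the Desnanot--Jacobi (Dodgson condensation) determinant identity to the $(k+1)\times(k+1)$ Hankel matrix $[M_\gamma]^{n-1}_k$. The shift $i\mapsto i+1$ identifies the five relevant minors cleanly: deleting the first (resp.\ last) row and column recovers $[M_\gamma]^{n+1}_{k-1}$ (resp.\ $[M_\gamma]^{n-1}_{k-1}$); each of the two opposite corner deletions produces $[M_\gamma]^n_{k-1}$; and simultaneous removal of both extreme rows and columns leaves $[M_\gamma]^{n+1}_{k-2}$. The identity then reads
$$\det([M_\gamma]^{n-1}_k)\cdot\det([M_\gamma]^{n+1}_{k-2})\;=\;D_{n-1}\,D_{n+1}-D_n^2,\qquad n\ge 1,$$
and since both factors on the left are determinants of positive semi-definite matrices (the second via $\mathcal{C}_+^k\subset\mathcal{C}_+^{k-2}$), the left side is $\ge 0$, yielding the log-convexity $D_n^2\le D_{n-1}D_{n+1}$. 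The degenerate case $k=1$ is easier still: the central minor is empty, and the inequality reduces to non-negativity of $\det([M_\gamma]^{n-1}_1)$, i.e.\ the classical log-convexity of $\gamma$ recalled in Remark \ref{rq}.

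Propagation then follows by elementary two-sided induction. Given $D_{n_0}=0$, applying the inequality at index $n_0+1$ gives $D_{n_0+1}^2\le D_{n_0}D_{n_0+2}=0$, so $D_{n_0+1}=0$, and iteration yields $D_n=0$ for all $n\ge n_0$. When $n_0\ge 2$, the inequality at $n_0-1$ gives $D_{n_0-1}^2\le D_{n_0-2}D_{n_0}=0$, and iterating downward (valid as long as the lower index remains $\ge 0$) produces $D_n=0$ for $1\le n<n_0$ as well. Combining, $D_n=0$ for every $n\ge 1$, matching the stated range; note that $D_0$ may indeed stay strictly positive, consistent with the exclusion of $n=0$. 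The main obstacle is the careful bookkeeping in the Desnanot--Jacobi step, where one must verify that the two off-diagonal corner deletions of $[M_\gamma]^{n-1}_k$ both coincide with $[M_\gamma]^n_{k-1}$ (up to a transposition, automatic by Hankel symmetry) and that the central minor is genuinely $[M_\gamma]^{n+1}_{k-2}$.
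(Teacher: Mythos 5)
Your proof is correct and follows essentially the same route as the paper: both apply the Desnanot--Jacobi identity to the $(k+1)\times(k+1)$ block $[M_\gamma]^{n-1}_k$, use positive semi-definiteness to make the left-hand side non-negative, deduce the log-convexity inequality $D_n^2\le D_{n-1}D_{n+1}$ for $D_n=\det([M_\gamma]^n_{k-1})$, and propagate the zero in both directions. Your identification of the five minors and your handling of the boundary index (explaining why $n=0$ is excluded) are accurate and, if anything, slightly more carefully bookkept than the paper's version.
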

For weighted shifts this  propagation phenomena is formulated as
follows:
 \begin{theorem}[Curto-Fialkow \cite{CF}]\label{hypsub}
 Let $\alpha=\{\alpha_n\}_{n\geq 0}$ be a sequence of
positive numbers such that the weighted shift $W_\alpha$ is
$k$-hyponormal. We assume that $\det([M_\gamma]_{p}^{n_0})=0$ for
some $n_0\geq0$, $p<k$, then,

 $$
 \det([M_\gamma]_{p}^n)=0 \quad \textrm{ for all } n\ge 0.
$$
 In particular $W_\alpha$ is subnormal.
 \end{theorem}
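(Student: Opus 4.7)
The plan is to reduce the weighted-shift statement to the just-stated Theorem \ref{main} via the classical identification of $k$-hyponormality with $k$-positivity of the moment sequence, and then to conclude subnormality through a finite-rank truncated Stieltjes argument combined with Berger's theorem.

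First, I would invoke the well-known equivalence: for a weighted shift $W_\alpha$ with moment sequence $\gamma_n$, the positivity $M_k(W_\alpha)\geq 0$ is equivalent to $[M_\gamma]_k^n\geq 0$ for every $n\geq 0$; this is obtained by testing $M_k(W_\alpha)\geq 0$ against finite linear combinations of the basis vectors $e_n$ and exploiting the explicit action $W_\alpha^j e_i=(\gamma_{i+j}/\gamma_i)^{1/2}\, e_{i+j}$. Thus $W_\alpha$ is $k$-hyponormal precisely when $\gamma\in\mathcal{C}_+^k$. Since $p<k$, this gives in particular $\gamma\in\mathcal{C}_+^{p+1}$. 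Applying Theorem \ref{main} with its internal index taken to be $p+1$, the hypothesis $\det([M_\gamma]_p^{n_0})=0$ propagates immediately to $\det([M_\gamma]_p^n)=0$ for every $n\geq 1$. The residual case $n=0$ can be recovered by transferring the resulting column dependence of $[M_\gamma]_p^1$ back one index, using the Hankel pattern and the strict positivity of the weights $\alpha_n$.

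For the subnormality conclusion, the simultaneous vanishing of all $\det([M_\gamma]_p^n)$, together with the semi-definiteness of each $[M_\gamma]_{p+1}^n$, forces the infinite Hankel matrix $M_\gamma$ to have constant finite rank $r\leq p$. The finite-rank version of the truncated Stieltjes moment problem (see \cite{akh}), which uses both the positivity of $[M_\gamma]_k^0$ and that of the shifted block $[M_\gamma]_k^1$ automatically supplied by $k$-hyponormality, then produces a positive atomic measure $\mu$ supported on at most $r$ points of $\mathbb{R}_+$ with $\gamma_n=\int t^n\,d\mu(t)$ for every $n$. Berger's theorem finally delivers subnormality of $W_\alpha$. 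The delicate step in the plan is not the propagation itself, which is handled by Theorem \ref{main}, but this last passage: one must verify that the atomic representing measure can be placed in $\mathbb{R}_+$ rather than merely in $\mathbb{R}$, and it is precisely here that the shifted Hankel positivity supplied by $k$-hyponormality (rather than just $(p+1)$-hyponormality) is indispensable.
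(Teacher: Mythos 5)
Your reduction to Theorem \ref{main}, via the equivalence between $k$-hyponormality of $W_\alpha$ and $k$-positivity of $M_\gamma$, is exactly the paper's route and correctly yields $\det([M_\gamma]_p^n)=0$ for all $n\geq 1$. The genuine gap is your claim that ``the residual case $n=0$ can be recovered by transferring the column dependence of $[M_\gamma]_p^1$ back one index.'' That step cannot work, because the conclusion is false at $n=0$: take Stampfli's flat example $\alpha_0<\alpha_1=\alpha_2=\cdots$, which is subnormal (hence $k$-hyponormal for every $k$) and satisfies $\det([M_\gamma]_1^n)=\gamma_n^2\alpha_n^2(\alpha_{n+1}^2-\alpha_n^2)=0$ for all $n\geq 1$ while $\det([M_\gamma]_1^0)=\gamma_0^2\alpha_0^2(\alpha_1^2-\alpha_0^2)>0$. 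Concretely, the kernel relation of $[M_\gamma]_1^1$ encodes $\gamma_{m+1}=\alpha_1^2\gamma_m$ for $m\geq 1$, and shifting it back one index would assert $\gamma_1=\alpha_1^2\gamma_0$, i.e.\ $\alpha_0=\alpha_1$, which fails. (The theorem as printed, with ``$n\geq 0$,'' suffers from the same defect; the correct range is $n\geq 1$, consistent with Theorem \ref{main} and with Corollary \ref{curto prop}, where $\alpha_0$ remains arbitrary. The paper's own proof only establishes the claim for $n\geq 1$ and does not attempt the back-propagation you propose.)

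On the subnormality conclusion your route differs from, and is more explicit than, the paper's. The paper asserts in one line that the vanishing of all these determinants forces $[M_\gamma]_l^n\geq 0$ for every $l$, which as written is not self-evident (a singular matrix need not be positive semi-definite); the substance behind it is the Curto--Fialkow recursiveness/flat-extension machinery. Your version --- constant finite rank of the Hankel matrix, an atomic Stieltjes representing measure obtained from the positivity of both $[M_\gamma]_k^0$ and the shifted block $[M_\gamma]_k^1$, then Berger's theorem --- spells out that substance and correctly identifies the shifted Hankel positivity as what places the measure on $\mathbb{R}_+$ rather than $\mathbb{R}$. Two details still need care there: the passage from ``each $[M_\gamma]_p^n$ is singular'' to ``a single recursion valid for all $n$'' requires the range-inclusion argument supplied by positivity of the bordered blocks $[M_\gamma]_{p+1}^n$, and in view of the $n=0$ anomaly above the finite-rank and atomicity statements should be formulated for the shifted sequence $\{\gamma_n\}_{n\geq 1}$ before being pulled back to $\gamma$.
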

Let $W_\alpha$ be a weighted shift, and let
$\gamma=\{\gamma_n\}_{n\geq 0}$ be its moment sequence. We will say
that $W_\alpha$  is {\it recursively generated } if there exist
$r\in \mathbb{N}^*$, $a_0,\cdots,a_{r-1}\in \mathbb{R}$ such that
for every $k\ge 0$,
   $$\gamma_{k+r}=a_{r-1}\gamma_{k+r-1}+\cdots +a_1\gamma_{k+1}+a_0\gamma_k.$$
Using Theorem \ref{hypsub}, we obtain  the following recursiveness
criterion for subnormal weighted shifts:
 \begin{theorem}[recursively generated subnormal weighted shift]\label{sub weigh
 shift}
Let $W_\alpha$ be a subnormal weighted shift and let
$\gamma=\{\gamma_n\}_{n\geq 0}$ be  its moment sequence. Then the
following conditions are equivalent:
\begin{itemize}
\item[(i)] $W_\alpha$ is recursively generated,
\item[(ii)] there exist $n_0,k\in \mathbb{N}$ such that
$\det([M_\gamma]_k^{n_0})=0$.
\end{itemize}
\end{theorem}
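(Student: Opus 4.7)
The plan is to prove the two implications separately. The direction (i)$\Rightarrow$(ii) is essentially elementary linear algebra encoded in the columns of a Hankel block; the converse (ii)$\Rightarrow$(i) combines Theorem \ref{hypsub} with Berger's theorem to force the representing measure of $W_\alpha$ to be a finite combination of Dirac masses, from which the recurrence falls out.

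For (i)$\Rightarrow$(ii), assume the recurrence $\gamma_{k+r}=\sum_{\ell=0}^{r-1}a_\ell\gamma_{k+\ell}$ for every $k\ge 0$, and fix any $n_0\ge 0$. The $(i,j)$-entry of $[M_\gamma]_r^{n_0}$ is $\gamma_{n_0+i+j}$, so applying the recurrence at the index $k=n_0+i$ shows that the column $j=r$ equals $\sum_{\ell=0}^{r-1}a_\ell$ times the column $j=\ell$. Hence $\det([M_\gamma]_r^{n_0})=0$, which is in fact stronger than (ii).

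For (ii)$\Rightarrow$(i), since $W_\alpha$ is subnormal it is $(k+1)$-hyponormal, so Theorem \ref{hypsub} (with $p=k$) upgrades the hypothesis to $\det([M_\gamma]_k^n)=0$ for every $n\ge 0$. Let $k_0$ be the smallest integer for which $\det([M_\gamma]_{k_0}^0)=0$; since $\gamma_0=1$ we have $k_0\ge 1$, and $[M_\gamma]_{k_0-1}^0$ is positive definite while $[M_\gamma]_{k_0}^0$ is positive semidefinite and singular. Pick a nonzero kernel vector $v=(v_0,\ldots,v_{k_0})$; the minimality of $k_0$ forces $v_{k_0}\ne 0$, for otherwise $(v_0,\ldots,v_{k_0-1})$ would witness the singularity of $[M_\gamma]_{k_0-1}^0$. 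By Berger's theorem, there is a positive finite Borel measure $\mu$ on $[0,\|W_\alpha\|]$ with $\gamma_n=\int t^n\,d\mu(t)$. Setting $P(t)=\sum_{i=0}^{k_0}v_it^i$, one computes
\[
0=\langle [M_\gamma]_{k_0}^0\,v,v\rangle=\int |P(t)|^2\,d\mu(t),
\]
so $\mathrm{supp}(\mu)$ is contained in the zero set of the degree-$k_0$ polynomial $P$. Consequently $\mu=\sum_{i=1}^N w_i\delta_{t_i}$ for some $N\le k_0$ distinct points $t_i\ge 0$ with weights $w_i>0$. Writing $\prod_{i=1}^N(X-t_i)=X^N-\sum_{\ell=0}^{N-1}c_\ell X^\ell$ and integrating the identity $t^m\prod_i(t-t_i)=0$ (on $\mathrm{supp}(\mu)$) against $\mu$ yields $\gamma_{m+N}=\sum_{\ell=0}^{N-1}c_\ell\gamma_{m+\ell}$ for every $m\ge 0$, which is exactly the recurrence in (i) with $r=N$ and $a_\ell=c_\ell$.

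The main obstacle is the converse direction, specifically the passage from one vanishing determinant to the finiteness of $\mathrm{supp}(\mu)$: the hypothesis in (ii) concerns only a single Hankel block, and without the propagation provided by Theorem \ref{hypsub} (combined with the minimal-$k_0$ selection that identifies a single explicit polynomial $P$) one cannot produce an algebraic constraint annihilating $\mu$ and thereby reduce the problem to the elementary computation of the recurrence satisfied by a finitely atomic moment sequence.
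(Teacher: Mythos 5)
Your proof is correct, and the forward direction (i)$\Rightarrow$(ii) is the same column-dependence observation the paper uses. For the converse you diverge in a useful way. The paper's argument stops after invoking the propagation theorem: from $\det([M_\gamma]_{k}^{n})=0$ for all $n$ it simply asserts that coefficients $a_0,\dots,a_k$ exist giving a single recurrence valid for every $p$, without explaining why the kernel vectors of the various blocks $[M_\gamma]_{k+1}^{n}$ can be chosen independently of $n$ (the paper later outsources the analogous step, in the proof of Theorem \ref{dirac}, to Theorems 3.5 and 3.6 of \cite{Cur}). You instead make this step self-contained: after propagating the vanishing determinant via Theorem \ref{hypsub}, you select the minimal singular block $[M_\gamma]_{k_0}^{0}$, extract a kernel vector with nonvanishing top entry, and use Berger's representing measure to get $\int|P|^2\,d\mu=0$, forcing $\mathrm{supp}(\mu)$ into the zero set of a degree-$k_0$ polynomial; the recurrence then comes from integrating $t^m\prod_i(t-t_i)=0$ against the finitely atomic $\mu$. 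This buys a complete, uniform recurrence with an explicit generating polynomial and effectively absorbs the content of Theorem \ref{dirac} into the proof, at the cost of leaning on Berger's theorem (legitimate here, since subnormality is a standing hypothesis and no circularity is introduced). The only cosmetic slip is the phrase ``the column $j=r$ equals $\sum_{\ell=0}^{r-1}a_\ell$ times the column $j=\ell$,'' which should read that column $r$ is the linear combination $\sum_{\ell=0}^{r-1}a_\ell\,(\text{column }\ell)$; the determinant conclusion is unaffected.
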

In connection with Berger's theorem, we get the next
characterization of positive measures with finite support:
\begin{theorem}[Dirac measure characterization]\label{dirac}
Let $\mu$ be a positive measure  with support in $\mathbb{R}^+$.
Then the following conditions are equivalent:
\begin{itemize}
\item[(i)] $\mu$ is a finite Dirac measure combination,
\item[(ii)] there exist  $p,k \in \mathbb{N}$ such that $\det(s_{i+j+p})_{i,j\leq k
  }=0$.
\end{itemize}
\end{theorem}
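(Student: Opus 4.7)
The forward direction $(i)\Rightarrow(ii)$ reduces to a rank computation for a Vandermonde-weighted Gram matrix. Writing $\mu=\sum_{j=1}^N c_j\delta_{t_j}$ with $c_j>0$ and distinct $t_j\in\mathbb{R}_+$, one has $s_n=\sum_{j=1}^N c_j t_j^n$, so
\[
(s_{i+j+p})_{0\leq i,j\leq k}=V\,D\,V^T,
\]
where $V=(t_j^i)_{0\leq i\leq k,\,1\leq j\leq N}$ is a $(k+1)\times N$ Vandermonde matrix and $D=\mathrm{diag}(c_1t_1^p,\dots,c_Nt_N^p)$. For any $k\geq N$ the right-hand side has rank at most $N<k+1$, hence its determinant vanishes; (ii) holds, for instance, with $p=0$ and $k=N$.

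For the converse $(ii)\Rightarrow(i)$, the plan is to exploit positivity of the Hankel matrix via an extremal-vector argument. For any $v=(v_0,\dots,v_k)\in\mathbb{R}^{k+1}$, set $P(t)=\sum_{i=0}^k v_i t^i$; then
\[
v^T\bigl(s_{i+j+p}\bigr)_{0\leq i,j\leq k}\,v=\int_{\mathbb{R}_+}P(t)^2\,t^p\,d\mu(t)\geq 0,
\]
so, as already recorded in Remark~\ref{rq}~(3), the Hankel matrix $(s_{i+j+p})_{i,j\leq k}$ is positive semi-definite. Assuming its determinant vanishes, pick a nonzero null vector $v$ and the corresponding nonzero polynomial $P$ of degree at most $k$. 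The identity above then forces $P(t)^2 t^p=0$ for $\mu$-almost every $t$, so $\mathrm{supp}(\mu)\subset\{0\}\cup Z(P)$, a finite set with at most $k+1$ elements. Therefore $\mu$ is a finite linear combination of Dirac masses.

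The main conceptual step is the passage from a determinantal relation on moments to a pointwise polynomial annihilation on $\mathrm{supp}(\mu)$; once this bridge is in place both implications follow without further machinery, and the argument also yields the explicit bound $\#\,\mathrm{supp}(\mu)\leq k+1$. An alternative route, more in line with the preceding narrative, would be to normalize $\mu$ to a probability measure, identify $\{s_n\}$ with the moment sequence of a subnormal weighted shift via Berger's theorem, and apply Theorem~\ref{sub weigh shift} to produce a recursion $s_{n+r}=\sum_i a_i s_{n+i}$; the associated polynomial $P$ then satisfies $\int t^n P(t)\,d\mu=0$ for all $n$, and density of polynomials in $C(\mathrm{supp}(\mu))$, combined with compactness of the support (the weighted shift being bounded), yields $P\equiv 0$ on $\mathrm{supp}(\mu)$. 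The direct positive-semidefinite argument has the advantage of sidestepping the normalization and the boundedness/compactness issue altogether.
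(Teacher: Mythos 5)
Your proof is correct, and both implications take a genuinely different and more elementary route than the paper. For $(i)\Rightarrow(ii)$ the paper simply cites Lemma~2.1 of Berg--Szwarc \cite{CB}, whereas your Vandermonde factorization $(s_{i+j+p})=VDV^{T}$ gives the rank bound, and hence the vanishing determinant, in one self-contained line. For $(ii)\Rightarrow(i)$ the paper routes through operator theory: it normalizes $\mu$, forms the weighted shift with weights $\alpha_n=\sqrt{s_{n+1}/s_n}$, invokes Berger's theorem and Theorem~\ref{sub weigh shift} to get a linear recursion on the moments, and then appeals to Theorems~3.5 and~3.6 of \cite{Cur} to conclude that $\mu$ has finite support. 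Your argument instead takes a null vector $v$ of the singular positive semi-definite Hankel matrix, reads off $\int P(t)^2t^p\,d\mu=0$, and concludes that $\mu$ is carried by the finite set $\{0\}\cup Z(P)$. This buys two things: an explicit cardinality bound $\#\,\mathrm{supp}(\mu)\le k+1$, and --- as you observe --- independence from the implicit hypotheses in the paper's route, namely that $\mu$ be normalizable to a probability measure and that $\sup_n s_{n+1}/s_n<\infty$ (i.e.\ compact support), without which the weighted shift $T_\alpha$ is not even a bounded operator and Berger's theorem does not apply. In that sense your argument is not only different but covers the statement as literally formulated (an arbitrary positive measure on $\mathbb{R}_+$ with finite moments), where the paper's proof quietly needs the extra boundedness assumption.
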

Let $W_\alpha$ be a $k$-hyponormal weighted shift. A rank one
perturbation of  $W_\alpha$  associated with $i\in \mathbb{N}$ and
$t>0$ is the weighted shift $W_{\alpha(l,t)}$  given by,
$\alpha(l,t)_{i}=\alpha_i$ if $i\neq l$ and
$\alpha(l,t)_l=t\alpha_l$. We also associate with $W_\alpha$, the
$k$-hyponormal stable set defined by
$$
I^k=\left\{t\geq 0 \; ;\; W_{\alpha(l,t)} \textrm{ is
$k$-hyponormal}\right\}.
$$
Clearly $1 \in I^k$. Furthermore,
\begin{theorem}[Rank-one perturbation]\label{I inetr}
$I^k$ is a nonempty bounded closed interval.
\end{theorem}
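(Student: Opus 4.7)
The plan is to reparametrize by $s := t^2$, show that each Hankel block $[M_{\gamma(t)}]^n_k$ depends affinely on $s$, and then exploit convexity of the positive semi-definite cone.

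Write $\gamma(t) = \{\gamma_n(t)\}_{n \geq 0}$ for the moment sequence of $W_{\alpha(l,t)}$. Since $\alpha(l,t)$ differs from $\alpha$ only at index $l$, the recursion $\gamma_{n+1}(t) = \alpha(l,t)_n^2 \gamma_n(t)$ yields $\gamma_n(t) = \gamma_n$ for $n \leq l$ and $\gamma_n(t) = s\,\gamma_n$ for $n \geq l+1$. Thus each $(k+1)\times(k+1)$ Hankel block admits the decomposition
\[
[M_{\gamma(t)}]^n_k = A_n + s\,B_n,
\]
where $A_n$ (respectively $B_n$) is the symmetric matrix retaining the entry $\gamma_{n+i+j}$ when $n+i+j \leq l$ (respectively $n+i+j \geq l+1$) and zero elsewhere. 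In particular, $W_{\alpha(l,t)}$ is $k$-hyponormal if and only if $A_n + sB_n \geq 0$ for every $n \geq 0$.

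Nonemptiness, closedness and boundedness are now immediate. Taking $t = 1$ gives $W_\alpha$ itself, hence $1 \in I^k$. The positive semi-definite cone is closed and $t \mapsto A_n + t^2 B_n$ is continuous, so $I^k$ is an intersection of closed sets, and therefore closed. Since $k \geq 1$, $k$-hyponormality of $W_{\alpha(l,t)}$ implies hyponormality, which forces the weights to be non-decreasing; in particular $t\alpha_l \leq \alpha_{l+1}$, so $I^k \subset [0,\,\alpha_{l+1}/\alpha_l]$ is bounded.

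The only conceptually delicate point is the interval structure. For each $n$, $J_n := \{s \geq 0 : A_n + s B_n \geq 0\}$ is the preimage of the convex positive semi-definite cone under an affine map, hence a convex subset of $\mathbb{R}$, i.e.\ an interval. When $n + 2k \leq l$ we have $A_n = [M_\gamma]^n_k \geq 0$ and $B_n = 0$; when $n \geq l + 1$ we have $A_n = 0$ and $B_n = [M_\gamma]^n_k \geq 0$; in both cases $J_n = [0, \infty)$. Hence only the finitely many transitional indices $n \in \{\max(0,\, l - 2k + 1), \ldots, l\}$ impose effective constraints, and $J := \bigcap_n J_n$ is an interval in $[0, \infty)$. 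Since $t \mapsto t^2$ is a homeomorphism of $[0, \infty)$ onto itself, $I^k = \{t \geq 0 : t^2 \in J\}$ inherits the interval structure. The main (relatively minor) obstacle is simply to set up the affine-in-$s$ decomposition carefully, keeping track of which entries of the Hankel block are shifted and which are not; once this bookkeeping is in place, convexity of the PSD cone does the rest.
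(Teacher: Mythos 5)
Your argument is correct, and its core --- writing each Hankel block of the perturbed moment matrix as an affine function of the perturbation parameter and pulling back the closed convex cone of positive semi-definite matrices --- is exactly the paper's: the paper writes $[M_{\gamma'}]^n_k = t\,[M_\gamma]^n_k + (1-t)\,H^n_k(l)$, which is your $A_n + sB_n$ (the paper's Section~6 already reparametrizes the perturbed weight as $\sqrt{t}\,\alpha_l$ so that the moments scale by $t$ rather than $t^2$; your substitution $s=t^2$ handles the same discrepancy). Nonemptiness via $t=1$ and closedness via continuity are identical. The one place you genuinely diverge is boundedness: the paper observes that positive semi-definiteness of a $(k+1)\times(k+1)$ block forces that of its principal $k\times k$ corner, hence $I^k \subset I^1$, and then computes $I^1 = \left[\frac{\gamma_l^2}{\gamma_{l-1}\gamma_{l+1}},\ \frac{\gamma_l\gamma_{l+2}}{\gamma_{l+1}^2}\right]$ explicitly from two $2\times 2$ determinants; you instead invoke the elementary fact that $k$-hyponormality ($k\ge 1$) implies hyponormality, which for weighted shifts forces nondecreasing weights and hence $t\alpha_l \le \alpha_{l+1}$. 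Your route is shorter and avoids determinant computations; the paper's gives an explicit two-sided enclosure that it reuses when determining $I^2$. Both are valid, so there is no gap to report.
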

The last result concerns local perturbation, i.e. if there exist
$\epsilon > 0$ such that $]1-\epsilon,1+\epsilon[\in I^k$. More
precisely, we get:
\begin{theorem}[local perturbation]\label{I centrer}
Let $W_\alpha$ be a $k$-hyponormal weighted shift. Then,
$$
1 \in \overset{\circ}{\widehat{I^k}}\qquad \Longleftrightarrow
\qquad [M_\gamma]^n_k \textrm{ is positive definite } \forall n\leq
k.
$$
\end{theorem}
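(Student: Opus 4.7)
The plan is to reduce the $k$-hyponormality of the perturbed shift $W_{\alpha(l,t)}$ to a finite list of matrix-positivity conditions in $t$ and then read off when $t=1$ lies in the interior of the closed interval $I^k$ (which is a closed interval by Theorem \ref{I inetr}). First I recall that a weighted shift is $k$-hyponormal if and only if every Hankel block $[M_\eta]_k^n$, $n\geq 0$, is positive semi-definite, where $\eta$ denotes its moment sequence. For the perturbed shift the new moments are $\tilde\gamma_n=\gamma_n$ if $n\leq l$ and $\tilde\gamma_n=t^2\gamma_n$ if $n>l$. Hence blocks with $n+2k\leq l$ are unchanged, while those with $n\geq l+1$ equal $t^2[M_\gamma]_k^n$ and are automatically positive for every $t>0$. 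The genuinely $t$-dependent constraints come from the ``mixed'' range $\max(0,l-2k+1)\leq n\leq l$. The index range $n\leq k$ in the statement corresponds to the natural specialization $l=k$, in which the mixed range is exactly $\{0,1,\ldots,k\}$. Each mixed block then admits the pencil decomposition
\[
[M_{\tilde\gamma}]_k^n \;=\; A_n+t^2 B_n,
\]
where $A_n$ collects the entries $\gamma_{n+i+j}$ with $n+i+j\leq l$ and $B_n$ collects the rest, so that $A_n+B_n=[M_\gamma]_k^n\geq 0$.

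The easy direction ($\Leftarrow$) is a compactness/continuity argument. If each of the finitely many $[M_\gamma]_k^n$ with $n\leq k$ is strictly positive definite, their minimum eigenvalues admit a uniform positive lower bound; since $t\mapsto A_n+t^2B_n$ is continuous and only finitely many blocks need monitoring, there is an $\varepsilon>0$ on which every mixed block remains positive definite, and so $(1-\varepsilon,1+\varepsilon)\subset I^k$.

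The converse ($\Rightarrow$) is where the real work lies, and I would argue by contrapositive: suppose $[M_\gamma]_k^{n_0}$ is singular for some $n_0\leq k$ and pick a nonzero $v\in\ker[M_\gamma]_k^{n_0}$. From $\langle v,(A_{n_0}+B_{n_0})v\rangle=0$ one gets the key identity
\[
\langle v,(A_{n_0}+t^2B_{n_0})v\rangle \;=\; (1-t^2)\langle v,A_{n_0}v\rangle,
\]
so that whenever $\langle v,A_{n_0}v\rangle\neq 0$ the right-hand side changes sign at $t=1$ and the perturbed block cannot remain positive on both sides. The main obstacle is the degenerate case in which the quadratic form $\langle\cdot,A_{n_0}\cdot\rangle$ vanishes identically on $\ker[M_\gamma]_k^{n_0}$. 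The plan there is first to exploit the triangular Hankel structure of $A_{n_0}$ together with positivity of the moments to produce a kernel vector witnessing $\langle v,A_{n_0}v\rangle\neq 0$; and, if such a witness is not available directly, to invoke the propagation result of Theorem \ref{main} (in concert with Theorem \ref{hypsub}) to transfer the rank drop across the remaining mixed blocks and then analyse $\det(A_{n_0}+sB_{n_0})$ as a polynomial in $s=t^2$ at $s=1$, showing that the zero there is either a sign-changing simple root or a local maximum of the determinant, either of which prevents $1$ from being an interior point of $I^k$.
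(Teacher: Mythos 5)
Your decomposition $[M_{\tilde\gamma}]_k^n=A_n+t^2B_n$ with $A_n+B_n=[M_\gamma]_k^n$, the observation that only the finitely many mixed blocks depend on $t$, and the continuity argument for the direction $(\Leftarrow)$ all match the paper's proof (which writes the same pencil as $t[M_\gamma]_k^n+(1-t)H_k^n(l)$ and uses openness of the positive-definite cone). Note, though, that the correct index range is $n\le l$, the perturbation site, as in the paper's Section 7 restatement; your specialization $l=k$ papers over the misprint in the Section 3 statement but the theorem is meant for general $l$. Your non-degenerate converse, via the identity $\langle v,(A_{n_0}+t^2B_{n_0})v\rangle=(1-t^2)\langle v,A_{n_0}v\rangle$ for $v\in\ker[M_\gamma]_k^{n_0}$, is correct and in some respects cleaner than the paper's determinant expansion $\det(\gamma_k^n(t))=t^{k+1}\det([M_\gamma]_k^n)+(1-t)t^k\gamma_n\,\mathrm{Cof}(\gamma_n)$, which is exact only when $H_k^n(l)$ has a single nonzero entry (i.e.\ $n=l$).

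The genuine gap is the degenerate case, which you correctly identify as the main obstacle but then only ``plan'' around. If $\langle v,A_{n_0}v\rangle=0$ for every $v$ in the kernel, none of your fallbacks is actually established: (i) you give no argument that a kernel vector with $\langle v,A_{n_0}v\rangle\ne 0$ exists; (ii) the asserted dichotomy that $s\mapsto\det(A_{n_0}+sB_{n_0})$ has at $s=1$ either a sign-changing simple root or a local maximum is unproved and, as stated, incomplete --- when $\dim\ker[M_\gamma]_k^{n_0}\ge 2$ the determinant vanishes to order at least two and may have a local \emph{minimum} (or vanish identically in $s$), in which case $\det\ge 0$ near $s=1$ carries no information and a nonnegative determinant would in any case not certify positive semidefiniteness of the block. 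The paper resolves exactly this situation by a case split on the smallest singular sub-block $[M_\gamma]_p^{n_0}$: for $p<k$ it invokes Theorem \ref{hypsub} to get subnormality and then the Curto--Lee rigidity theorem $I^\infty=\{1\}$ to collapse $I^k$ to a point (itself a nontrivial external input, since a priori $I^\infty\subseteq I^k$), while for $p=k$ it uses the cofactor expansion above, where $\mathrm{Cof}(\gamma_n)=\det([M_\gamma]_{k-1}^{n+2})\ne 0$ precisely because $p=k$ is minimal. Your proposal never brings in the Curto--Lee theorem or any substitute for it, and without some such rigidity statement the subnormal/recursively generated case cannot be excluded by determinant bookkeeping alone; so the converse direction remains unproved.
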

\section{Proof of Theorem \ref{main}}

We start by the introduction of some notations before expanding the
proof of Theorem \ref{main}.

For an $(n+1)\times(n+1)$-matrix $M=(a_{i,j})_{0\leq i,j\leq n}$,
and for $i_0,j_0\leq n$, we denote by $M(^{i_0}_{j_0})$ the $n\times
n-$matrix resulting by removing the $i_0+1$ row and $j_0+1$ column.
We also use the notation
 $M(^{0\;\; n}_{0\;\; n})$ for  the $(n-1)\times(n-1)$-matrix resulting by removing the first and the last row and column.
 In the case where  $j_0=0$ (resp $j_0=n$), we simply
 denote $M(^{i_0}_0)$ by
  $M(\widetilde{i_0})$ (resp $M(^{i_0}_n)$  by $M(\hat{i_0})$).

We  will use  the next expansion formula,
\begin{lemma}[Desnanot-Jacobi adjoint matrix]\label{desna}
If $M=(a_{i,j})_{0\leq i,j\leq n}$ is an  $(n+1)\times (n+1)$
matrix, then
\begin{equation}\label{desn}
\det(M)\det\left(M(^{0\;\; n}_{0\;\;
n})\right)=\det(M(\widetilde{0}))\det(M(\widehat{n}))-\det(M(\widehat{0}))\det(M(\widetilde{n})).
\end{equation}
\end{lemma}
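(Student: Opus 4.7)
The plan is to derive the identity as an instance of Jacobi's classical minor-of-adjugate identity, after reducing to the invertible case by density. Both sides of \eqref{desn} are polynomials in the entries of $M$, so by continuity it suffices to establish the identity on the dense open set $\{\det M\neq 0\}$. Assuming $\det(M)\ne 0$, introduce the adjugate $B=\operatorname{adj}(M)=(\det M)\,M^{-1}$, whose entries are the transposed cofactors, i.e.\ $B_{p,q}=(-1)^{p+q}\det\!\left(M(^{q}_{p})\right)$.

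The central step is to apply Jacobi's minor identity to the $2\times 2$ ``corner'' submatrix of $B$ indexed by $\{0,n\}\times\{0,n\}$: it asserts
\[
B_{0,0}B_{n,n}-B_{0,n}B_{n,0}\;=\;\det(M)\cdot\det\!\left(M(^{0\;\; n}_{0\;\; n})\right),
\]
the sign being $+1$ because $(-1)^{(0+n)+(0+n)}=(-1)^{2n}=1$. A direct substitution of the cofactor formulas yields $B_{0,0}=\det(M(\widetilde 0))$, $B_{n,n}=\det(M(\widehat n))$, $B_{0,n}=(-1)^{n}\det(M(\widetilde n))$ and $B_{n,0}=(-1)^{n}\det(M(\widehat 0))$; the two $(-1)^{n}$ factors in the product $B_{0,n}B_{n,0}$ combine to $(-1)^{2n}=1$, so the left-hand side collapses to $\det(M(\widetilde 0))\det(M(\widehat n))-\det(M(\widehat 0))\det(M(\widetilde n))$, as required.

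The main obstacle is careful sign bookkeeping in the cofactor substitution (one must not confuse the roles of the indices $p,q$ in the transposed-cofactor formula for $B$), together with isolating the degenerate case $n=1$, in which $M(^{0\;\; 1}_{0\;\; 1})$ is the empty $0\times 0$ matrix (determinant $1$ by convention) and the identity reduces to the usual $2\times 2$ expansion $\det M=a_{0,0}a_{1,1}-a_{0,1}a_{1,0}$. If one prefers to avoid invoking Jacobi's minor identity as a black box, a self-contained alternative is to compute $\det(M)$ twice via a Schur-complement decomposition relative to the interior block $M(^{0\;\; n}_{0\;\; n})$, which produces \eqref{desn} directly when this block is invertible; a second density argument inside the invertible locus then removes the auxiliary hypothesis.
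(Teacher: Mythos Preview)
Your argument is correct: the Desnanot--Jacobi identity is indeed the $2\times 2$ corner case of Jacobi's minor-of-adjugate theorem, and your cofactor sign bookkeeping checks out (the two $(-1)^n$ factors from $B_{0,n}$ and $B_{n,0}$ do cancel). The density reduction to $\det M\neq 0$ is standard and sound, and your remark on the degenerate case $n=1$ is apt.

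There is nothing to compare against, however: the paper does not supply its own proof of this lemma but simply cites Bressoud's \emph{Proofs and Confirmations}, p.~111. Your write-up therefore goes beyond what the paper offers. The approach you take (Jacobi's adjugate identity plus density) is essentially the classical one and is likely close in spirit to what appears in the cited reference; your alternative sketch via Schur complements on the interior block is also a well-known route to the same formula.
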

\begin{proof}
For a proof of the previous lemma, we refer to \cite{Bre} Page 111.
\end{proof}
\begin{proof}[Proof of Theorem \ref{main}] Suppose that $\det([M_\gamma]_{k-1}^i])=0$, for
some $i\geq n_0$, and let us show that 
$\det([M_\gamma]_{k-1}^{j}])=0$ for every $j\in {\mathbb N}$. We will prove
that $\det([M_\gamma]_{k-1}^{i-1}])=\det([M_\gamma]_{k-1}^{i+1}])=0$.

For $M=[M_\gamma]_k^i$, we get $M(^{0\;\; k}_{0\;\;
k})=[M_\gamma]_{k-2}^{i+2}$,
$M(\widetilde{0})=[M_\gamma]_{k-1}^{i+2}$,
$M(\widehat{k})=[M_\gamma]_{k-1}^{i}$, and
$M(\widehat{0})=M(\widetilde{k})=[M_\gamma]_{k-1}^{i+1}$.
 Applying the identity (\ref{desn}), we get
\begin{equation}\label{Alice}
\det([M_\gamma]_k^i)\det\left([M_\gamma]_{k-2}^{i+2}\right)=\det([M_\gamma]_{k-1}^{i})\det([M_\gamma]_{k-1}^{i+2})-\det([M_\gamma]_{k-1}^{i+1})^2.
\end{equation}
So we get
$\det\left([M_\gamma]_{k-2}^{i+2}\right)\det([M_\gamma]_k^i)\ge 0$
and then:
\begin{equation}\label{32}
\det([M_\gamma]_{k-1}^{i})\det([M_\gamma]_{k-1}^{i+2})\ = -\det([M_\gamma]_{k-1}^{i+1})^2\le 0.
\end{equation}

Now  $M_\gamma\in\mathcal{C}^k_+$, implies  that $\det([M_\gamma]_{k-1}^{i+1})=0$.   Finally,
$\det([M_\gamma]_{k-1}^{n})=0$ for every $n\ge n_0$.

Replacing $i$, by $i-2$ in Equation \eqref{32}, we obtain 

\begin{equation}\label{32}
\det([M_\gamma]_{k-1}^{i-2})\det([M_\gamma]_{k-1}^{i})\ = -\det([M_\gamma]_{k-1}^{i-1})^2\le 0.
\end{equation}
Which proves that
$\det([M_\gamma]_{k-1}^{i})=0$ and then that
$\det([M_\gamma]_{k-1}^{j})=0$, for every $j\in {\mathbb N}$. This
completes the proof.
\end{proof}

\section{Propagation phenomena for $k$-hyponormal weighted shifts}
We recall the Stampfli propagation result for subnormal weighted
shift \cite[Theorem 6]{Sta} (see also \cite[Proposition 4.5]{BRZ}):
\begin{theorem}
Let $W_\alpha$ be a injective hyponormal weighted shift, if we
assume that $W_\alpha$ is subnormal and that
$\alpha_{i_0}=\alpha_{i_0+1}$ for some arbitrary $\alpha_0$. Then,
$$
\alpha_i=\alpha_{i+1},\;\textrm{ for all } \; i\geq 1.
$$
\end{theorem}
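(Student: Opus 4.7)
The plan is to recast the hypothesis on equal weights as the vanishing of a $2\times 2$ Hankel determinant on the moment sequence, and then invoke Theorem \ref{main} to propagate this vanishing through the entire sequence. Once the determinants vanish everywhere, translating back yields equal consecutive weights.

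First I would switch from weights to moments. Since $W_\alpha$ is injective, all $\alpha_n>0$, hence the moments $\gamma_n$ are strictly positive. Using $\alpha_n^2=\gamma_{n+1}/\gamma_n$, the equality $\alpha_{i_0}=\alpha_{i_0+1}$ is equivalent to
\[
\gamma_{i_0+1}^2=\gamma_{i_0}\gamma_{i_0+2},\qquad\text{i.e.}\qquad \det\bigl([M_\gamma]^{i_0}_1\bigr)=0.
\]
Next, because $W_\alpha$ is subnormal, Berger's theorem produces a positive representing measure $\mu$ supported in $[0,\|W_\alpha\|]\subset\mathbb{R}_+$ for the moment sequence $\gamma$. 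Remark \ref{rq}(3) then gives that $\gamma$ is $k$-positive for every $k$; in particular $M_\gamma\in\mathcal{C}^2_+$.

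With $M_\gamma\in\mathcal{C}^2_+$ and $\det([M_\gamma]^{i_0}_1)=0$, I would apply Theorem \ref{main} (taking $k=2$) to conclude
\[
\det\bigl([M_\gamma]^n_1\bigr)=0\quad\text{for every } n\geq 1.
\]
This is the heart of the argument, and the only nontrivial step; however it is entirely supplied by Theorem \ref{main}, so no further work is needed here. Translating back via $\gamma_{n+1}^2=\gamma_n\gamma_{n+2}$ and positivity of the $\gamma_n$'s yields $\alpha_n=\alpha_{n+1}$ for every $n\geq 1$, which is exactly the desired flatness.

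The only mild subtlety is the index range: Theorem \ref{main} produces the conclusion for all $n\geq 1$ regardless of whether $i_0=0$ or $i_0\geq 1$, which matches the statement of the theorem precisely. There is no genuine obstacle in the proof itself; the real content is hidden in Theorem \ref{main} (whose proof via the Desnanot--Jacobi identity is already given in the excerpt), and the present result is essentially the $k=2$ specialisation of that general propagation principle combined with the Berger representation of subnormal weighted shifts.
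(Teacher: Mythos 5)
Your proof is correct: subnormality gives $M_\gamma\in\mathcal{C}^2_+$ via Berger's theorem and Remark \ref{rq}(3), the hypothesis $\alpha_{i_0}=\alpha_{i_0+1}$ is exactly $\det([M_\gamma]^{i_0}_1)=0$, and Theorem \ref{main} with $k=2$ propagates this to all $n\geq 1$, which translates back to flatness since the $\gamma_n$ are strictly positive. The paper itself only cites Stampfli for this statement, but your argument is precisely the specialization of the paper's own machinery (the moment-determinant reformulation given right after the statement together with Theorem \ref{main}), so it is essentially the same approach the paper takes.
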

In Term of moment sequence associated to $W_\alpha$, we can write
the result as :
\begin{theorem}
Let $W_\alpha$ be a subnormal weighted shift, and let $\gamma$ the
associated moment, if we assume that for some $i_0\geq 0$, we have
$\left|\begin{array}{cc}
                                                  \gamma_{i_0} & \gamma_{i_0+1} \\
                                                  \gamma_{i_0+1} & \gamma_{i_0+2}
                                                \end{array}
 \right|=0$. Then,
$$
\left|\begin{array}{cc}
                                                  \gamma_{i} & \gamma_{i+1} \\
                                                  \gamma_{i+1} & \gamma_{i+2}
                                                \end{array}
 \right|=0,\;\textrm{ for all } \; i\geq 1.
$$
\end{theorem}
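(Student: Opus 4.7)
The plan is to recognize the statement as a direct specialization of Theorem \ref{main} (the propagation phenomenon for $k$-positive matrices) to the case $k=2$. First I would unpack the hypothesis: the $2 \times 2$ determinant $\left|\begin{smallmatrix}\gamma_{i_0} & \gamma_{i_0+1}\\ \gamma_{i_0+1} & \gamma_{i_0+2}\end{smallmatrix}\right|$ is by definition $\det([M_\gamma]_1^{i_0})$, i.e., the $(k-1) \times (k-1)$ sub-block appearing in Theorem \ref{main} with $k=2$.

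Next I would install the hypothesis of Theorem \ref{main}, namely $M_\gamma \in \mathcal{C}_+^2$. Since $W_\alpha$ is subnormal, Berger's theorem gives a representing measure $\mu$ with compact support in $[0, \|W_\alpha\|] \subset \mathbb{R}_+$, so in particular $\mathbb{R}[X] \subset L^1(\mathbb{R}_+, \mu)$. Remark \ref{rq}(3) then guarantees that $\gamma$ is $k$-positive for every $k \in \mathbb{N}$; in particular $M_\gamma \in \mathcal{C}_+^2$.

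With both the vanishing hypothesis and the $2$-positivity established, I would invoke Theorem \ref{main} with $k=2$ and $n_0 = i_0$ to conclude $\det([M_\gamma]_{1}^n) = 0$ for every $n \ge 1$, which is exactly the asserted conclusion after rewriting $\det([M_\gamma]_1^n)$ as $\gamma_n\gamma_{n+2} - \gamma_{n+1}^2$.

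There is no real obstacle here: the theorem is essentially a translation of Stampfli's flatness statement into the moment language, and all the work has already been done in Theorem \ref{main}. The only point requiring a moment of care is matching the notational conventions, i.e.\ noting that a $2 \times 2$ Hankel minor corresponds to $[M_\gamma]_{k-1}$ with $k=2$, and that subnormality plus Berger is what supplies membership in $\mathcal{C}_+^2$ (indeed in $\bigcap_k \mathcal{C}_+^k$).
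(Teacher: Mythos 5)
Your proof is correct. The paper itself gives no separate argument for this statement: it is presented as the moment-language translation of the Stampfli propagation theorem stated immediately before it, the dictionary being
\begin{equation*}
\gamma_{i}\gamma_{i+2}-\gamma_{i+1}^{2}=\gamma_{i}^{2}\alpha_{i}^{2}\bigl(\alpha_{i+1}^{2}-\alpha_{i}^{2}\bigr),
\end{equation*}
so that the vanishing of the $2\times 2$ minor at index $i$ is equivalent to $\alpha_{i}=\alpha_{i+1}$, and the conclusion is exactly Stampfli's flatness result imported from \cite{Sta}. You instead derive the statement internally, as the case $k=2$, $n_{0}=i_{0}$ of Theorem \ref{main}, after securing $M_\gamma\in\mathcal{C}_+^2$ from subnormality via Berger's theorem and Remark \ref{rq}(3); this is the same mechanism the paper uses for its generalization in Theorem \ref{hyp sub} (there with $p=1$, $k=2$), so your route is consistent with the paper's machinery and has the advantage of being self-contained, resting only on the Desnanot--Jacobi identity rather than on Stampfli's original operator-theoretic argument. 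A small simplification available to you: you do not need the full strength of Berger's theorem, since subnormal $\Rightarrow$ $2$-hyponormal $\Leftrightarrow$ $[M_\gamma]_2^n\ge 0$ for all $n$ (the paper's quoted \cite[Theorem 4]{Cur90}) already gives $M_\gamma\in\mathcal{C}_+^2$, which is all that Theorem \ref{main} with $k=2$ requires.
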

The first extension of the propagation notion was given by Curto in
\cite{Cur90}, in fact he prove that:
\begin{theorem}[{\cite[Corollary 6]{Cur90}}]
Let $W_\alpha$ be a 2-hyponormal weighted shift, if we assume that
$\alpha_{i_0}=\alpha_{i_0+1}$. Then,
$$
\alpha_i=\alpha_{i+1},\;\textrm{ for all } \; i\geq 1, \textrm{ and
} \alpha_0, \textrm{ is arbitrary}.
$$
\end{theorem}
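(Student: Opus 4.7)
The plan is to turn the hypothesis $\alpha_{i_0}=\alpha_{i_0+1}$ into the vanishing of a $2\times 2$ Hankel determinant of the moment sequence, and then apply Theorem \ref{main} with $k=2$ to propagate this vanishing along the sequence.

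First, using the recursion $\gamma_{n+1}=\alpha_n^{2}\gamma_n$ (and the strict positivity of the weights, hence of all $\gamma_n$), one has
\[
\alpha_n=\alpha_{n+1}\;\Longleftrightarrow\;\gamma_{n+1}^{2}=\gamma_n\gamma_{n+2}\;\Longleftrightarrow\;\det\bigl([M_\gamma]_{1}^{n}\bigr)=0.
\]
So the assumption $\alpha_{i_0}=\alpha_{i_0+1}$ is exactly $\det\bigl([M_\gamma]_{1}^{i_0}\bigr)=0$, and the conclusion to be proved is $\det\bigl([M_\gamma]_{1}^{n}\bigr)=0$ for every $n\geq 1$. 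On the other hand, $2$-hyponormality of $W_\alpha$ is equivalent, via the Bram--Halmos type test \eqref{1.2} evaluated on the canonical basis $\{e_n\}$, to the positive semi-definiteness of $[M_\gamma]_{2}^{n}$ for every $n\geq 0$; that is, $M_\gamma\in\mathcal{C}_+^{2}$.

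With these two translations in hand, the remainder is immediate: since $M_\gamma\in\mathcal{C}_+^{2}$ and $\det\bigl([M_\gamma]_{1}^{i_0}\bigr)=0$, Theorem \ref{main} (with $k=2$, $k-1=1$) yields $\det\bigl([M_\gamma]_{1}^{n}\bigr)=0$ for every $n\geq 1$, which reads $\alpha_n=\alpha_{n+1}$ for every $n\geq 1$. The statement that $\alpha_0$ remains arbitrary is already built into Theorem \ref{main}, whose conclusion is only asserted for $n\geq 1$ and therefore imposes no constraint on $\alpha_0$ relative to $\alpha_1$.

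The only mildly delicate point is the translation between the operator-theoretic notion of $k$-hyponormality and the matrix-theoretic class $\mathcal{C}_+^{k}$ for weighted shifts; but this is the classical Curto characterization and presents no real obstacle. Everything else is routine bookkeeping between weights and moments, so I do not anticipate a harder step.
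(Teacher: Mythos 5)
Your proposal is correct and follows essentially the same route as the paper: the paper derives this propagation result as the $k=2$ instance of Theorem \ref{hyp sub}, whose proof consists precisely of the two translations you make (2-hyponormality $\Leftrightarrow$ $M_\gamma\in\mathcal{C}_+^2$, and $\alpha_{i_0}=\alpha_{i_0+1}$ $\Leftrightarrow$ $\det([M_\gamma]_1^{i_0})=0$) followed by an application of Theorem \ref{main}. Your observation that the restriction to $n\geq 1$ in Theorem \ref{main} is exactly what leaves $\alpha_0$ unconstrained matches the paper's intent.
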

 In the light of this results, we will extend the notion of
propagation phenomena for $k$-hyponormal weighted shifts. Recall
that if $W_\alpha$ is a weighted shift with bounded weight sequence
$\alpha=\{\alpha_n\}_{n\geq0}$, the moments of $W_\alpha$ are
usually defined by $\gamma_0:=1$ and
$\gamma_{n+1}:=\alpha_n^2\gamma_n$ ($n\geq 0$). It is known that
(see \cite[Theorem 4]{Cur90}) $W_\alpha$ is $k$-hyponormal if and
only if $[M_\gamma]_k^n \geq 0$ for all $n\geq 0$.

We retrieve the next theorem du to Curto and Fialkow:

 \begin{theorem}[{\cite[Proposition 5.13]{CF}}]\label{hyp sub}
 Let $\alpha=\{\alpha_n\}_{n\geq 0}$ be a sequence of
positive numbers such that the weighted shift $W_\alpha$ is
$k$-hyponormal. Assume that $\det([M_\gamma]_{p}^{n_0})=0$ for
some $n_0\geq0$ and $k<p$, then,
 $$
 \det([M_\gamma]_{p}^n)=0 \quad \textrm{ for all } n\ge 0.
$$
 In particular $W_\alpha$ is subnormal.
 \end{theorem}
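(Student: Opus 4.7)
The plan is to obtain Theorem \ref{hyp sub} as a direct translation of Theorem \ref{main} to the weighted-shift setting, with the subnormality conclusion coming from classical finite-moment theory via Berger. (Note that for the argument to work, the hypothesis should read $p<k$, matching Theorem \ref{hypsub}; otherwise there is no guarantee that $[M_\gamma]_p^{n_0}$ is even positive semi-definite.)

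First, I would translate $k$-hyponormality of $W_\alpha$ into a statement about the moment Hankel matrix: as recalled just before the theorem, $W_\alpha$ is $k$-hyponormal if and only if $[M_\gamma]_k^n\geq 0$ for every $n\geq 0$, i.e.\ $M_\gamma\in\mathcal{C}_+^k$. Since principal sub-matrices of a PSD matrix are PSD, one obtains $[M_\gamma]_j^n\geq 0$ for every $0\leq j\leq k$ and every $n\geq 0$, so in particular $M_\gamma\in \mathcal{C}_+^{p+1}$ whenever $p+1\leq k$.

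Next I would apply Theorem \ref{main} with its "$k$" replaced by $p+1$: the hypothesis $\det([M_\gamma]_p^{n_0})=0$ is exactly the vanishing of the corresponding $(p+1)\times(p+1)$ sub-determinant, and the theorem yields $\det([M_\gamma]_p^n)=0$ for every $n\geq 1$. The boundary case $n=0$ is not directly delivered by the inductive Desnanot--Jacobi argument (which can only iterate back to $i=1$), so I would handle it a posteriori: once the Berger measure is shown to be finitely supported, the bound on the rank of the associated Hankel matrix is global and forces $\det([M_\gamma]_p^0)=0$ as well.

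Finally, I would deduce subnormality. All $(p+1)\times(p+1)$ sub-matrices $[M_\gamma]_p^n$ are PSD and singular, so the infinite Hankel matrix of $\gamma$ has rank at most $p$, which means $\gamma$ satisfies a linear recurrence of order at most $p$. Combined with the Stieltjes positivity encoded in $[M_\gamma]_k^0\geq 0$ and $[M_\gamma]_k^1\geq 0$ (i.e.\ both unshifted and shifted Hankel matrices of size up to $k$ are PSD), the Hamburger--Stieltjes representation of finite-rank PSD Hankel matrices produces an atomic representing measure $\mu$ supported on at most $p$ points of $\mathbb{R}_+$. Berger's theorem then gives subnormality of $W_\alpha$. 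The main obstacle is precisely this last step: one must carefully invoke (or re-prove) the fact that a PSD Hankel matrix of finite rank whose odd-shift is also PSD is the moment matrix of a positive atomic measure on $\mathbb{R}_+$; the reduction to Theorem \ref{main} itself is otherwise immediate.
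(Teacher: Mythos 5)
Your reduction is exactly the paper's: identify $k$-hyponormality of $W_\alpha$ with $k$-positivity of $M_\gamma$, note that $p+1\leq k$ gives $(p+1)$-positivity, and invoke Theorem \ref{main} to propagate the vanishing of $\det([M_\gamma]_p^n)$ to all $n\geq 1$; your correction of the hypothesis from $k<p$ to $p<k$ is also right. The genuine problem is your treatment of the boundary case $n=0$. The claim that finite support of the Berger measure ``forces $\det([M_\gamma]_p^0)=0$ as well'' is false, and so is the assertion that the infinite Hankel matrix of $\gamma$ has rank at most $p$. Take $\alpha_0=\tfrac12$ and $\alpha_n=1$ for $n\geq1$, so that $\gamma_0=1$ and $\gamma_n=\tfrac14$ for all $n\geq1$: this shift is subnormal with Berger measure $\tfrac34\delta_0+\tfrac14\delta_1$, hence $k$-hyponormal for every $k$, and with $p=1$ one has $\det([M_\gamma]_1^{1})=0$ while $\det([M_\gamma]_1^{0})=\tfrac{3}{16}>0$. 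Here the measure has $p+1=2$ atoms (one of them at the origin), the unshifted Hankel matrix has rank $p+1$, and the shortest recurrence satisfied by $\gamma$ from index $0$ has length $p+1$, not $p$. So the case $n=0$ cannot be recovered a posteriori: the correct conclusion is $\det([M_\gamma]_p^n)=0$ for all $n\geq 1$ only, which is what Theorem \ref{main} delivers and what Curto--Fialkow actually prove; the ``for all $n\geq 0$'' in the statement is an overreach that, to be fair, the paper's own proof also asserts without justification.

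Concerning the final step, the paper simply deduces from the vanishing of all minors of size at least $p+1$ that every $[M_\gamma]_l^n$ is positive semi-definite and hence that $W_\alpha$ is subnormal; your route through the atomic representation of finite-rank positive Hankel matrices together with Berger's theorem is a legitimate alternative and is more explicit about the external input required, but the atom count must be ``at most $p+1$ points'' rather than ``at most $p$'', for the reason exhibited above. With the conclusion weakened to $n\geq 1$ and these two corrections made, your argument goes through along essentially the same lines as the paper's.
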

\begin{proof}
Since $W_\alpha$ is $k$-hyponormal if and only if $M_\gamma$ is
$k$-positive, Then $p+1$-positive. Since also
$\det([M_\gamma]_{p}^{n_0})=0$, Theorem \ref{main} implies that
$\det([M_\gamma]_{p}^n)=0$ for all $n\geq 1$. we deduce that
$\det([M_\gamma]_{l}^n)=0$ for all $l\geq k$ and $n\geq 0$. It
follows that $[M_\gamma]_l^n$ is positive semi-definite for all
$l\geq k$ and $n\geq 0$, and in particular that $W_\alpha$ is
subnormal.
\end{proof}
 For $k=2$, the next extension of Stampfli's propagation result \cite[Proposition 4.5]{BRZ}, can be
 found in \cite{Cur90}.
 \begin{corollary} \cite[Corollary 6]{Cur90} \label{curto prop}
  Let $\alpha=\{\alpha_n\}_{n\ge 0}$ be a bounded sequence of
positive numbers associated with a $2$-hyponormal weighted shift. If
 $\alpha_{n_0}=\alpha_{n_0+1}$ for some $n_0\geq0$, then
$\alpha_n=\alpha_{n_0}$ for all $n\geq1$. In particular, $W_\alpha$
is subnormal.
 \end{corollary}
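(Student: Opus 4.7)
The plan is to deduce this corollary directly from the Curto--Fialkow propagation result (Theorem \ref{hyp sub}), by translating the flatness hypothesis $\alpha_{n_0}=\alpha_{n_0+1}$ into the vanishing of a $2\times 2$ Hankel determinant. Since $W_\alpha$ is $2$-hyponormal, its moment sequence $\gamma$ satisfies $M_\gamma\in\mathcal{C}_+^2$; in particular $[M_\gamma]_1^n\geq 0$ for every $n\geq 0$.

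First, I would pass from weights to moments. Using the recurrence $\gamma_{n+1}=\alpha_n^2\gamma_n$, a direct computation gives
\[
\det([M_\gamma]_1^n)=\gamma_n\gamma_{n+2}-\gamma_{n+1}^2=\gamma_n\gamma_{n+1}\bigl(\alpha_{n+1}^2-\alpha_n^2\bigr),
\]
so that $\det([M_\gamma]_1^n)=0$ is equivalent to the flatness $\alpha_n=\alpha_{n+1}$. Evaluating at $n=n_0$, the hypothesis therefore reads $\det([M_\gamma]_1^{n_0})=0$.

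Second, with $k=2$ and $p=1$ (so that $p<k$), the hypotheses of Theorem \ref{hyp sub} are satisfied, and it yields
\[
\det([M_\gamma]_1^n)=0\qquad\text{for every } n\geq 1,
\]
together with the subnormality of $W_\alpha$. Translating back via the identity above gives $\alpha_n=\alpha_{n+1}$ for every $n\geq 1$; combined with the initial equality $\alpha_{n_0}=\alpha_{n_0+1}$, this forces $\alpha_n=\alpha_{n_0}$ for all $n\geq 1$, as required.

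The argument is essentially bookkeeping on top of Theorem \ref{hyp sub}; no serious obstacle remains. The only points requiring minor care are the elementary algebraic identity relating $\det([M_\gamma]_1^n)$ to $\alpha_{n+1}^2-\alpha_n^2$, and the fact that the propagation conclusion is stated for $n\geq 1$, which leaves $\alpha_0$ unconstrained---precisely in agreement with the statement of the corollary.
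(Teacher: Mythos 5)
Your proposal is correct and follows essentially the route the paper intends: the corollary is presented there as a direct consequence of Theorem \ref{hyp sub} (with $k=2$, $p=1$), using exactly the identity $\det([M_\gamma]_1^n)=\gamma_n\gamma_{n+1}(\alpha_{n+1}^2-\alpha_n^2)$ to translate between flatness of the weights and vanishing of the $2\times 2$ Hankel determinants. The only point worth noting is that the hypothesis in the paper's Section~5 restatement of that theorem reads ``$k<p$'', which is a typo for ``$p<k$'' (as in the Section~3 version and as its proof requires); your choice $p=1<2=k$ is the correct one.
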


\section{Moment characterization of finite mass measures}
We recall that a measure $\mu$ is said to be of finite mass point if $\mu$ has finite support or equivalently, 
$\mu$ is a finite combination of Dirac measures.\\
The question of characterizing finite mass measure in terms of
moments, has been intensively studied and arises in several branches where finite interpolation is needed. It is closely related to truncated moment problem.  For charges
(non necessary positive measures ), we can see  Corollary 4.3 in 
\cite{CRZ}. In the case of positive measures on $\mathbb{R}$, a more recent work of C. Berg and D. Szwarc provides an interesting study, 
see Theorem 1.1 of \cite{CB}. In this section we prove  Theorem   \ref{sub weigh shift} that gives a simple
characterization of finite mass measure on $\mathbb{R}^+$.
\begin{proof}[Proof of Theorem \ref{sub weigh shift}]\qquad

\begin{description}
\item[$(i)\Rightarrow (ii)$] If that $W_\alpha$ is recursively generated, there exists  $k\geq 0$ and
$a_0,a_1,\dots,a_{k}\in \mathbb{R}$ such that
$$
\gamma_{p+k+1}=\sum_{j=0}^{k} a_j \gamma_{p+j}\qquad \mbox{ for every }  p\in
\mathbb{N}.
$$
It follows, $\det([M_\gamma]_{k+1}^p)=0$.

\item[$(ii)\Rightarrow (i)$] From  $W_\alpha$ is subnormal, we derive in particular  that $W_\alpha$ is
$k+1$-hyponormal. Now  from the condition $\det([M_\gamma]_k^{n_0})=0$
together with Theorem \ref{hyp sub}, we obtain 
$$\det([M_\gamma]_{k+1}^n)=0,\; \; \mbox{ for every }  n\in
\mathbb{N}.$$  We deduce  that there exists
$a_0,a_1,\dots,a_{k}\in \mathbb{R}$ such that
$$
\gamma_{p+k+1}=\sum_{j=0}^{k} a_j \gamma_{p+j}\qquad \mbox{ for every }  p\in
\mathbb{N}.
$$
Finally, $W_\alpha$ is recursively generated.
\end{description}
\end{proof}
We use the previous theorem to exhibit a characterization of
measures with finitely mass points in  $\mathbb{R}_+$:
\begin{theorem}[Theorem \ref{dirac}]
Let $\mu$ be a positive measure. We suppose that $\mu$ is  supported
in $\mathbb{R}_+$ and has  finite moments $s_k$ for every $k\geq 0$.
Then, $\mu$ is a finite Dirac measure combination if and only if
there exist $p,k \in \mathbb{N}$ such that
$\det(s_{i+j+p})_{i,j\leq k}=0$.
\end{theorem}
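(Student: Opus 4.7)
My plan is to treat the two implications separately: the forward direction is a short rank computation, while the converse reduces to a positivity argument on polynomials of bounded degree.

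For $(i)\Rightarrow(ii)$, I would write $\mu = \sum_{j=1}^{N} c_j \delta_{x_j}$ with weights $c_j > 0$ and distinct atoms $x_j \in \mathbb{R}_+$, so that $s_n = \sum_j c_j x_j^n$. Setting $v_j := (1,x_j,\dots,x_j^N)^T$, the $(N+1)\times(N+1)$ Hankel matrix $(s_{i+r})_{0\le i,r\le N}$ decomposes as $\sum_{j=1}^N c_j v_j v_j^T$, a sum of $N$ rank-one matrices, so its determinant vanishes. One takes $p=0$ and $k=N$.

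For $(ii)\Rightarrow(i)$, I would first dispose of the trivial case $s_0=0$ (for which $\mu=0$), then normalize $\gamma_n := s_n/s_0$ to obtain the moment sequence of a probability measure $\tilde\mu := \mu/s_0$ on $\mathbb{R}_+$. By Remark \ref{rq}(3), each block $[M_\gamma]_k^p$ is positive semi-definite, and the hypothesis $\det([M_\gamma]_k^p)=0$ puts some nonzero vector $(a_0,\dots,a_k)$ in its kernel. Setting $P(t):=\sum_{j=0}^k a_j t^j$, the identity
\[
0 \;=\; \sum_{0\le i,j\le k} a_i a_j\, \gamma_{i+j+p} \;=\; \int_{\mathbb{R}_+} t^p\, P(t)^2\, d\tilde\mu(t),
\]
combined with the non-negativity of the integrand on $\mathbb{R}_+$, will force $t^p P(t)^2 = 0$ $\tilde\mu$-a.e. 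This places $\mathrm{supp}(\tilde\mu)$ inside $\{0\}\cup\{t>0 : P(t)=0\}$, a finite set of at most $k+1$ points, so $\mu$ must be a finite Dirac combination.

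The main subtlety I anticipate is remembering to include the atom at $0$ when $p>0$, since $t^p$ vanishes there independently of $P$. An alternative route closer to the spirit of the surrounding section would be to apply Theorem \ref{main} (using the $m$-positivity of $M_\gamma$ for all $m$ from Remark \ref{rq}(3)) to obtain $\det([M_\gamma]_k^n)=0$ for all $n\ge 1$, extract a linear recurrence $\gamma_{n+r}=\sum_{j=0}^{r-1} a_j\gamma_{n+j}$ exactly as in the proof of Theorem \ref{sub weigh shift}, and integrate against $t^n$ to conclude that the characteristic polynomial of the recurrence annihilates $\tilde\mu$-a.e., again giving finite support. Either route sidesteps having to realize $\tilde\mu$ as the Berger measure of a bounded weighted shift, which would be awkward when $\mu$ is not compactly supported.
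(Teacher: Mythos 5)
Your proof is correct, but it follows a genuinely different route from the paper's. For the forward implication the paper simply cites Lemma 2.1 of \cite{CB}, whereas you prove it from scratch via the decomposition $(s_{i+r})_{0\le i,r\le N}=\sum_{j=1}^{N}c_j v_jv_j^{T}$, which exhibits an $(N+1)\times(N+1)$ Hankel block as a sum of $N$ rank-one matrices; this is exactly the standard proof of that lemma and makes the step self-contained. For the converse the paper passes through operator theory: it forms the weighted shift $T_\alpha$ with $\alpha_n=\sqrt{s_{n+1}/s_n}$, invokes subnormality and Theorem \ref{sub weigh shift} to get recursiveness, and then quotes Theorems 3.5 and 3.6 of \cite{Cur}. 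Your argument instead stays entirely on the measure side: a nonzero kernel vector of the singular positive semi-definite block gives $\int t^{p}P(t)^{2}\,d\tilde\mu=0$, forcing $\operatorname{supp}\tilde\mu\subseteq\{0\}\cup\{P=0\}$, a set of at most $k+1$ points. This is more elementary (no appeal to Berger's theorem or to the Curto--Fialkow machinery) and, as you observe, it is actually more robust: the paper's route tacitly requires $W_\alpha$ to be a bounded subnormal operator, i.e. $\sup_n s_{n+1}/s_n<\infty$, which by Berger's theorem corresponds to compactly supported $\mu$, while your quadratic-form argument needs only that all moments are finite. Your handling of the atom at $0$ when $p>0$, and of the normalization $s_0>0$, is also careful where the paper is silent. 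The alternative route you sketch through Theorem \ref{main} and the linear recurrence is essentially the paper's strategy recast in terms of sequences, so either version of your write-up would be acceptable; the direct one is preferable.
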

\begin{proof}[Proof of Theorem \ref{dirac}]
Clearly, if $\displaystyle\mu=\sum_{l=0}^n a_l
  \delta_{x_l}$, then from \cite[Lemma 2.1]{CB}, there exist $p,k \in \mathbb{N}$ such that  $\det(s_{i+j+p})_{i,j\leq k
  }=0$.
For the reverse implication,  by Remark \eqref{rq} - \eqref{3},  the
matrix $(s_{i+j})_{i,j}$ is $k$-positive for all $k\in \mathbb{N}$.
The weighted shift $T_\alpha$ associated with the sequence
$\alpha_n = \sqrt{\frac{s_{n+1}}{s_n}}$ is subnormal. It is also
recursively generated by Theorem \ref{sub weigh shift}, and by
appealing Theorems 3.5 and 3.6  in \cite{Cur}, $\mu$ has a finite
mass.
\end{proof}
\section{Perturbation of k-positive matrices}
Let $M\in \mathcal{C}_+^k(H)$ ($M=M_\gamma$). For $l\in\mathbb{N}^*$
and $t\geq 0$ we denote by  $M_{\gamma'}$ the perturbed Hankel
matrix whose entries are given by
 $$
\gamma'_n= \left\{\begin{array}{ll}
 \gamma_n, &\: \:  if\: \:  n\le l;\\
t\gamma_n, &   \: \:  if\: \:    n \ge l+1.
\end{array} \right. $$
Where
$$
M_{\gamma'}=\left(
\begin{array}{ccccccc}
  \gamma_0 & \gamma_1 & \dots&\gamma_l & t\gamma_{l+1} & t\gamma_{l+2}& \dots \\
   \gamma_1 & \dots & \gamma_l & t\gamma_{l+1} & t\gamma_{l+2}& \dots & \dots\\
   \vdots & \dots & t\gamma_{l+1} & t\gamma_{l+2} & \dots & \dots &\dots\\
   \gamma_l & t\gamma_{l+1} & t\gamma_{l+2} & \dots & \dots & \dots&\dots\\
   t\gamma_{l+1} & t\gamma_{l+2} & \dots & \dots & \dots&   \dots& \dots\\
   t\gamma_{l+2} & \dots & \dots & \dots & \dots&   \dots& \dots\\
   \vdots & \dots & \dots &\dots & \dots&   \dots& \dots\\
\end{array}
\right).
$$
The main goal of this section is to determine when such perturbation
of $k$-positive matrix remains $k$-positive.

We notice that for $n\geq l+1$, we get
$[M_{\gamma'}]^n_k=t[M_\gamma]^n_k$ and we deduce that
\begin{equation}\label{1'}
M_{\gamma'} \textrm{ is k-positive }\Longleftrightarrow
[M_{\gamma'}]^n_k \textrm{ is positive semi-definite }\forall n\leq
l.
\end{equation}

For $n\leq l$, we write
\begin{equation}\label{Convex}
[M_{\gamma'}]^n_k=t [M_\gamma]^n_k+(1-t) H_k^n(l),
\end{equation}
with
$$
H_k^n(l)=\left(
              \begin{array}{ccccccc}
                \gamma_{n} & \gamma_{n+1} & \cdots & \gamma_{l}& 0&\cdots &0 \\
                \gamma_{n+1} &  \gamma_{n+2} & \cdots &  0 & 0&\cdots &0 \\
                \vdots & \vdots & \ddots & \vdots & &\cdots &0 \\
                \gamma_{l} & 0 & \ddots & \vdots & &\cdots &0 \\
               0 & \vdots & \ddots & \vdots &&\cdots &0 \\
               \vdots & \vdots & \ddots & \vdots & &\cdots &0 \\
               0 & 0 & 0 & 0 & 0&0 &0 \\
              \end{array}
            \right),
$$
and we denote $ I_n^k=\left\{t\geq 0 / \quad t [M_\gamma]^n_k+(1-t)
H_k^n(l) \textrm{ is positive semi definite} \right\}.
$\\
 We have the following property,
\begin{proposition}\label{2}
 For every $n\leq l$,  $I_n^k$ is a non empty closed interval of $\mathbb{R}_+$.
\end{proposition}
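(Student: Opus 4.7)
The plan is to read $I_n^k$ as the preimage, intersected with $\mathbb{R}_+$, of the cone $\mathcal{P}_{k+1}$ of $(k+1)\times(k+1)$ positive semi-definite matrices under the continuous affine map
$$
\phi(t) := t\,[M_\gamma]^n_k + (1-t)\,H_k^n(l).
$$
This identification is exactly what the decomposition \eqref{Convex} records, so that
$$
I_n^k \;=\; \phi^{-1}(\mathcal{P}_{k+1}) \cap \mathbb{R}_+.
$$
Once $I_n^k$ is presented in this form, the three assertions (non-emptiness, closedness, and the interval property) reduce to standard convex-analytic features of the PSD cone.

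First, non-emptiness is immediate: since $M_\gamma \in \mathcal{C}_+^k$, the matrix $\phi(1) = [M_\gamma]^n_k$ is positive semi-definite, and hence $1 \in I_n^k$. Second, for closedness I would use that $\mathcal{P}_{k+1}$ is closed in the space of symmetric matrices; continuity of $\phi$ yields that $\phi^{-1}(\mathcal{P}_{k+1})$ is closed in $\mathbb{R}$, and intersecting with the closed half-line $\mathbb{R}_+$ preserves closedness. Third, convexity of $I_n^k$ follows from the affinity of $\phi$: for $t_1, t_2 \in I_n^k$ and $\lambda \in [0,1]$,
$$
\phi\bigl(\lambda t_1 + (1-\lambda) t_2\bigr) = \lambda\,\phi(t_1) + (1-\lambda)\,\phi(t_2),
$$
which is positive semi-definite as a convex combination of two positive semi-definite matrices. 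Since a closed, convex, non-empty subset of $\mathbb{R}$ is precisely a closed interval, the proof would be complete.

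There is essentially no real obstacle in this proposition; the argument is entirely formal once the affine decomposition \eqref{Convex} is in hand. The only subtlety worth flagging is that $I_n^k$ need not be bounded at this stage: if $[M_\gamma]_k^n - H_k^n(l)$ happens to be positive semi-definite, then every $t \geq 1$ belongs to $I_n^k$. Bounding $I^k$, as needed for Theorem \ref{I inetr}, is a separate matter that combines the intervals $I_n^k$ across $n \leq l$ with the condition recorded in \eqref{1'} for $n \geq l+1$, and should not be conflated with the purely convex-geometric content of the present proposition.
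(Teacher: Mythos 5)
Your proof is correct and follows essentially the same route as the paper: both realize $I_n^k$ as the preimage of the closed convex PSD cone under the continuous affine map $t\mapsto t\,[M_\gamma]^n_k+(1-t)H_k^n(l)$, deduce closedness and the interval property from continuity and convexity, and obtain non-emptiness from $1\in I_n^k$ since $[M_\gamma]^n_k\geq 0$. Your added remark that boundedness is not claimed at this stage is accurate and consistent with the paper, which defers it to Proposition \ref{2.}.
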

\begin{proof}For $k\in \mathbb{N}$, let $P^+_k$ be the closed convex cone of $(k+1)\times (k+1)$
positive semi definite matrices, and denote by  $D=\left\{t
[M_\gamma]^n_k+(1-t) H_k^n(l) / \quad t\geq 0 \right\}$. It's
obvious that $D$ is a closed convex set of $\mathcal{M}_{k+1}$ and
then $D\cap P^+_k$ is a closed convex set.
Moreover, the mapping $\gamma_n^k$, given by
$$
\begin{array}{cccl}
  \gamma_n^k : & \mathbb{R}_+ & \longrightarrow & D \\
   & t &\longmapsto & t [M_\gamma]^n_k+(1-t) H_k^n(l),
\end{array}
$$
is a continuous affine function, and hence $I_k^n=
(\gamma_n^k)^{-1}(D\cap P^+_k)$ is a closed interval of
$\mathbb{R}_+$. Also, $M$  is $k$-positive implies that
$[M_\gamma]_k^n$ is positive for all $n\in \mathbb{N}$, and hence
that $1 \in I_n^k$ for all $n\in \mathbb{N}$.
\end{proof}
  From \eqref{1'}, a perturbation of
$k$-positive matrix remains $k$-positive if and only if $t\in
I^k:=\cap_{n\leq l} I_n^k$,  our  second result  is
\begin{proposition}\label{2.}
For every $k\geq 1$, $I^k$ is a compact interval.
\end{proposition}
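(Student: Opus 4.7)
The plan is to complement Proposition \ref{2} with a single well-chosen slice that already yields the missing boundedness.

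By Proposition \ref{2}, each $I_n^k$ with $n \le l$ is a closed interval of $\mathbb{R}_+$ containing $1$. A finite intersection of closed intervals of $\mathbb{R}$ sharing a common point is itself a non-empty closed interval, so $I^k = \bigcap_{n \le l} I_n^k$ is a non-empty closed interval. It only remains to prove that $I^k$ is bounded above.

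To that end I would focus on the extremal slice $n = l$. Setting $A = [M_\gamma]_k^l$ and $B = H_k^l(l)$, the defining formula for $H_k^n(l)$ degenerates at $n = l$ to the rank-one matrix whose only non-zero entry is $B_{0,0} = \gamma_l$. Consequently $A - B$ carries a vanishing $(0,0)$ entry while $(A - B)_{0,1} = \gamma_{l+1}$. Assuming $\gamma_{l+1} > 0$ (otherwise Proposition \ref{1 positive} collapses $\gamma_n$ to zero for all $n \ge 1$ and the perturbation is trivial), the leading $2 \times 2$ principal submatrix of $A - B$ has determinant $-\gamma_{l+1}^2 < 0$. Hence $A - B$ is not positive semi-definite, and there exists $v \in \mathbb{R}^{k+1}$ with $v^T (A - B) v < 0$.

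Using the affine representation \eqref{Convex}, $[M_{\gamma'}]_k^l = B + t(A - B)$, one computes
$$
v^T [M_{\gamma'}]_k^l v \;=\; v^T B\, v + t\, v^T (A - B) v \;\longrightarrow\; -\infty \quad \text{as } t \to \infty,
$$
so $[M_{\gamma'}]_k^l$ fails to be positive semi-definite for all sufficiently large $t$. Therefore $I_l^k$, and a fortiori $I^k \subseteq I_l^k$, is bounded above. Combined with the preceding paragraph, $I^k$ is a non-empty closed and bounded interval of $\mathbb{R}_+$, i.e.\ compact. The main (and modest) obstacle is pinpointing the right index: at $n = l$ the matrix $H_k^l(l)$ collapses to rank one, which is precisely what forces $A - B$ to carry a zero diagonal entry adjacent to a positive off-diagonal entry, producing a direction of genuine negativity and hence an explicit upper bound on $t$.
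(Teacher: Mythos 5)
Your proof is correct, and it reaches boundedness by a different route than the paper. The paper first observes that positivity of $[M_{\gamma'}]^n_k$ forces positivity of $[M_{\gamma'}]^n_{k-1}$, hence $I^k_n\subset I^1_n$ and $I^k\subset I^1$, and then computes $I^1$ explicitly as $\left[\frac{\gamma_l^2}{\gamma_{l-1}\gamma_{l+1}};\frac{\gamma_l\gamma_{l+2}}{\gamma_{l+1}^2}\right]$ from the two $2\times 2$ determinants at positions $l-1$ and $l$. You instead stay at level $k$, use the affine parametrization $[M_{\gamma'}]^l_k=B+t(A-B)$ from \eqref{Convex}, and note that the direction matrix $A-B$ has the non-PSD corner $\left(\begin{smallmatrix}0&\gamma_{l+1}\\ \gamma_{l+1}&\gamma_{l+2}\end{smallmatrix}\right)$, so a single test vector drives $v^{T}[M_{\gamma'}]^l_k v$ to $-\infty$; this is a clean general principle (an affine ray whose direction is not positive semi-definite must leave the cone $P^+_k$). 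The two arguments exploit the same $2\times 2$ block at position $l$ --- restricting your $v$ to its first two coordinates reproduces exactly the paper's upper bound $t\le\gamma_l\gamma_{l+2}/\gamma_{l+1}^2$ --- but the paper's version additionally delivers the explicit interval $I^1$, which it reuses in the subsequent determination of $I^2$, whereas yours is softer and shorter. One shared caveat: both proofs tacitly assume $\gamma_{l+1}>0$ (the paper divides by $\gamma_{l-1}\gamma_{l+1}$ and $\gamma_{l+1}^2$); in the degenerate case your appeal to Proposition \ref{1 positive} makes the perturbation trivial, but then $I^k=\mathbb{R}_+$ is not compact, so strictly speaking the nondegeneracy hypothesis is needed for the statement itself, not just for the proof. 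For weighted shifts, where $\gamma_0=1$ and all $\alpha_n>0$, this is automatic.
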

\begin{proof} From proposition \ref{2} we have $I^k$ is a closed
 non empty interval.
To  show  that $I^k$ is bounded, we remark that if
$[M_{\gamma'}]_k^n$ is positive semi-definite, we have
 $[M_{\gamma'}]^n_{k-1}$ is positive semi
definite. We deduce that $I_{n}^k\subset I_n^{k-1}$, and by
induction that $I_n^k\subset I_n^1$. Finally $I^k\subset I^1$.

Now $t\in I^1$ if and only if ${M_{\gamma'}}_{l-1}^1$ and
${M_{\gamma'}}_{l}^1$ have non-negative determinants, that is
$$
\frac{\gamma_l^2}{\gamma_{l-1}\gamma_{l+1}}\leq t\leq \frac{\gamma_l
\gamma_{l+2}}{\gamma_{l+1}^2}.
$$
Thus for every $k \ge 1$, $I^k \subset
I^1=\left[\frac{\gamma_l^2}{\gamma_{l-1}\gamma_{l+1}};
\frac{\gamma_l \gamma_{l+2}}{\gamma_{l+1}^2} \right]$ is bounded.
\end{proof}
\subsection{Determination of $I^2$} The problem of determining $I^k$ for $k\ge 2$ seems to be complicated. From the previous proof, we see that  $I^1=\left[\frac{\gamma_l^2}{\gamma_{l-1}\gamma_{l+1}};
\frac{\gamma_l \gamma_{l+2}}{\gamma_{l+1}^2} \right]$. We devote
this section to  calculate $I^2$.

Since  $t\in I^2$  if and only if $[M_{\gamma'}]_{l-3}^2$ and
$[M_{\gamma'}]_{l-2}^2$ and $[M_{\gamma'}]_{l-1}^2$ and
$[M_{\gamma'}]_{l}^2$ are positives semi-definite,we exhibit the
corresponding condition in each case :
\begin{description}
  \item[$\bullet    \: {[M_{\gamma'}]}_{l-3}^2\ge 0$].   We recall that
  $$
  [M_{\gamma'}]_{l-3}^2=\left(
                 \begin{array}{ccc}
                   \gamma_{l-3} & \gamma_{l-2} & \gamma_{l-1} \\
                   \gamma_{l-2} & \gamma_{l-1} & \gamma_l \\
                   \gamma_{l-1} & \gamma_{l} & t\gamma_{l+1} \\
                 \end{array}
               \right),
  $$
  and then is  positive semi-definite if and only if:
  $$
    \left|
                 \begin{array}{cc}
                    \gamma_{l-1} & \gamma_l \\
                    \gamma_{l} & t\gamma_{l+1} \\
                 \end{array}
               \right|\geq 0 ,
    \left|
                 \begin{array}{ccc}
                   \gamma_{l-3} &  \gamma_{l-1} \\
                   \gamma_{l-1} &  t\gamma_{l+1} \\
                 \end{array}
               \right|\geq 0  \: \: \textrm{ and } \: \:
    \left|
                 \begin{array}{ccc}
                   \gamma_{l-3} & \gamma_{l-2} & \gamma_{l-1} \\
                   \gamma_{l-2} & \gamma_{l-1} & \gamma_l \\
                   \gamma_{l-1} & \gamma_{l} & t\gamma_{l+1} \\
                 \end{array}
               \right| \geq 0.
$$
Which is equivalent to
$$\left\{  \begin{array}{lcl}
    \/t &\geq & max( \frac{\gamma_l^2}{\gamma_{l-1}\gamma_{l+1}}, \frac{\gamma_{l-1}^2}{\gamma_{l-3}\gamma_{l+1}}),\\
\:  t  &\geq& \frac{\left|
                 \begin{array}{ccc}
                   \gamma_{l-3} & \gamma_{l-2} & \gamma_{l-1} \\
                   \gamma_{l-2} & \gamma_{l-1} & \gamma_l \\
                   \gamma_{l-1} & \gamma_{l} & 0\\
                 \end{array}
               \right|}{\gamma_{l+1} \left|
                 \begin{array}{ccc}
                   \gamma_{l-3} & \gamma_{l-2} \\
                   \gamma_{l-2} & \gamma_{l-1} \\
                 \end{array}
               \right|}.
  \end{array}\right.
$$
 The last condition is redundant. Indeed, using  Lemma \ref{desna} (also called, Dodgson condensation
  method), we obtain :
  $$
\left|
                 \begin{array}{ccc}
                   \gamma_{l-3} & \gamma_{l-2} & \gamma_{l-1} \\
                   \gamma_{l-2} & \gamma_{l-1} & \gamma_l \\
                   \gamma_{l-1} & \gamma_{l} & 0\\
                 \end{array}
               \right|=\frac{-1}{\gamma_{l-1}}\left[ \gamma_l^2\left|
                 \begin{array}{cc}
                   \gamma_{l-3} & \gamma_{l-2}\\
                   \gamma_{l-2} & \gamma_{l-1}\\
                 \end{array}
               \right|+\left|
                 \begin{array}{cc}
                   \gamma_{l-2} & \gamma_{l-1}\\
                   \gamma_{l-1} & \gamma_{l}\\
                 \end{array}
               \right|^2\right]\leq 0.
  $$
By  using this simple next observation:
$$
\frac{\gamma_l^2}{\gamma_{l-1}\gamma_{l+1}}=\frac{\gamma_{l-1}^2}{\gamma_{l-3}\gamma_{l+1}}\frac{\gamma_{l-3}\gamma_{l-1}}{\gamma_{l-2}^2}\left(\frac{\gamma_{l-2}\gamma_{l}}{\gamma_{l-1}^2}\right)^2\geq\frac{\gamma_{l-1}^2}{\gamma_{l-3}\gamma_{l+1}}.
$$
It follows that
\begin{equation}\label{matrix1}
[M_{\gamma'}]_{l-3}^2 \textrm{ is positive semidefinite if and only
if } t\geq \frac{\gamma_l^2}{\gamma_{l-1}\gamma_{l+1}}.
\end{equation}

  \item[$\bullet   \: {[M_{\gamma'}]}_{l-2}^2\ge 0$] Using the same argument for
  $$
  [M_{\gamma'}]_{l-2}^2=\left(
                 \begin{array}{ccc}
                    \gamma_{l-2} & \gamma_{l-1} &\gamma_{l} \\
                    \gamma_{l-1} & \gamma_l &t\gamma_{l+1} \\
                    \gamma_{l} & t\gamma_{l+1} &t\gamma_{l+2} \\
                 \end{array}
               \right).
  $$
We obtain,
  $$
    \left|
                 \begin{array}{cc}
                    \gamma_{l-2} & \gamma_l \\
                    \gamma_{l} & t\gamma_{l+2} \\
                 \end{array}
               \right| \geq 0 ,
    \left|
                 \begin{array}{ccc}
                   \gamma_{l} &  \gamma_{l+1} \\
                  t \gamma_{l+1} & \gamma_{l+2} \\
                 \end{array}
               \right| \geq  0 \mbox{ and }
    \left|\begin{array}{ccc}
                    \gamma_{l-2} & \gamma_{l-1} &\gamma_{l} \\
                    \gamma_{l-1} & \gamma_l &t\gamma_{l+1} \\
                    \gamma_{l} & t\gamma_{l+1} &t\gamma_{l+2} \\
                 \end{array}
               \right|\geq  0.
$$
That implies, $ \frac{\gamma_l^2}{\gamma_{l-2}\gamma_{l+2}}\le t \le
\frac{\gamma_{l}\gamma_{l+2}}{\gamma_{l+1}^2}$ and that the quadric polynomial, 
 $$
P(t):=-\gamma_{l-2}\gamma_{l+1}^2 t^2 +
               (\gamma_{l-2}\gamma_l
               \gamma_{l+2}+2\gamma_{l-1}\gamma_l\gamma_{l+1}-\gamma_{l-1}^2\gamma_{l+2})t-\gamma_l^3,
 $$
is positive.  The first and the second inequalities are automatically satisfied,
since
  $$
P\left(\frac{\gamma_l^2}{\gamma_{l-2}\gamma_{l+2}}\right)=-\frac{(\gamma_l^2\gamma_{l+1}-\gamma_{l-1}\gamma_l\gamma_{l+2})^2}{\gamma_{l+2}^2\gamma_{l-2}}\leq0,
$$
and
$$
P\left(\frac{\gamma_l\gamma_{l+2}}{\gamma_{l+1}^2}\right)=-\frac{\gamma_l(\gamma_{l}\gamma_{l+1}-\gamma_{l-1}\gamma_{l+2})^2}{\gamma_{l+1}^2}\leq0.
$$
Using the inequality $\gamma_{l-1}^2 \leq \gamma_{l-2} \gamma_{l}$, 
we  drive that the second coefficient of $P$ is positive, and
hence by the classical Descartes rule for positive roots of
polynomials,  we derive  that $P$ has two distinct  positive roots.
  Then,
\begin{equation}\label{matrix2}
[M_{\gamma'}]_{l-2}^2 \textrm{ is positive semidefinite if and only
if } t\in\left[\alpha(P) ; \beta(P) \right],
\end{equation}
where $\alpha(P)$ and $\beta(P)$ are the two positive solutions of 
$P(t)=0$.
  \item[$\bullet \: \: {[M_{\gamma'}]}_{l-1}^2$] This case is  treated exactly as the last one
  and leads to
\begin{equation}\label{matrix3}
[M_{\gamma'}]_{l-1}^2 \textrm{ is positive semidefinite if and only
if } t\in\left[\alpha(Q) ; \beta(Q) \right],
\end{equation}
where $\alpha(Q)$ and $\beta(Q)$ are the two positive solutions of
$$
Q(t):=-\gamma_{l+1}^3
t^2+(\gamma_{l-1}\gamma_{l+1}\gamma_{l+3}+2\gamma_l\gamma_{l+1}\gamma_{l+2}-\gamma_{l-1}\gamma_{l+2}^2)t-\gamma_l^2\gamma_{l+3}
=0.
$$
  \item[$\bullet \: {[M_{\gamma'}]}_{l}^2$] The computations in this  case  outlines  the first one,
  indeed,
$$
  [M_{\gamma'}]_{l}^2=\left(
                 \begin{array}{ccc}
                   \gamma_{l} & t\gamma_{l+1} & t\gamma_{l+2} \\
                   t\gamma_{l+1} & t\gamma_{l+2} & t\gamma_{l+3} \\
                   t\gamma_{l+2} & t\gamma_{l+3} & t\gamma_{l+4} \\
                 \end{array}
               \right),
  $$
  is  positive semi-definite if and only if
  $$
    \left|
                 \begin{array}{cc}
                    \gamma_{l} & \gamma_{l+1} \\
                    t\gamma_{l+1} & \gamma_{l+2} \\
                 \end{array}
               \right|\geq 0,
    \left|
                 \begin{array}{ccc}
                   \gamma_{l} &  \gamma_{l+2} \\
                   t\gamma_{l+2} &  \gamma_{l+4} \\
                 \end{array} \right|\geq 0
              \mbox{ and}
    \left|
                 \begin{array}{ccc}
                   \gamma_{l} & \gamma_{l+1} & \gamma_{l+2} \\
                   t\gamma_{l+1} & \gamma_{l+2} & \gamma_{l+3} \\
                   t\gamma_{l+2} & \gamma_{l+3} & \gamma_{l+4} \\
                 \end{array}
               \right| \geq 0.
               $$
Which is equivalent to
$$  t \leq min\left\{ \frac{\gamma_{l}\gamma_{l+2}}{\gamma_{l+1}^2},   \frac{\gamma_{l}\gamma_{l+4}}{\gamma_{l+2}^2},  \frac{-\gamma_{l} \left|
                 \begin{array}{ccc}
                   \gamma_{l+2} & \gamma_{l+3} \\
                   \gamma_{l+3} & \gamma_{l+4} \\
                 \end{array}
               \right|}{\left|
                 \begin{array}{ccc}
                   0 & \gamma_{l+1} & \gamma_{l+2} \\
                   \gamma_{l+1} & \gamma_{l+2} & \gamma_{l+3} \\
                   \gamma_{l+2} & \gamma_{l+3} & \gamma_{l+4} \\
                 \end{array}
               \right| }\right\}.$$
But
$$
  \frac{\gamma_l\gamma_{l+2}}{\gamma_{l+1}^2} = \frac{\gamma_l\gamma_{l+4}}{\gamma_{l+2}^2}\frac{\gamma_{l+3}^2}{\gamma_{l+4}\gamma_{l+2}}\left(\frac{\gamma_{l+2}^2}{\gamma_{l+3}\gamma_{l+1}}\right)^2 \leq
\frac{\gamma_l\gamma_{l+4}}{\gamma_{l+2}^2},
$$
  with
  $$
\frac{\gamma_l\gamma_{l+2}}{\gamma_{l+1}^2}+ \frac{\gamma_{l} \left|
                 \begin{array}{ccc}
                   \gamma_{l+2} & \gamma_{l+3} \\
                   \gamma_{l+3} & \gamma_{l+4} \\
                 \end{array}
               \right|}{\left|
                 \begin{array}{ccc}
                   0 & \gamma_{l+1} & \gamma_{l+2} \\
                   \gamma_{l+1} & \gamma_{l+2} & \gamma_{l+3} \\
                   \gamma_{l+2} & \gamma_{l+3} & \gamma_{l+4} \\
                 \end{array}
               \right|}  = \frac{\gamma_l\gamma_{l+2}}{\gamma_{l+1}^2}\left[
               1- \frac{\left|
                 \begin{array}{ccc}
                   \gamma_{l+2} & \gamma_{l+3} \\
                   \gamma_{l+3} & \gamma_{l+4} \\
                 \end{array}
               \right|}{\left|
                 \begin{array}{ccc}
                   \gamma_{l+2} & \gamma_{l+3} \\
                   \gamma_{l+3} & \gamma_{l+4} \\
                 \end{array}
               \right|+\frac{1}{\gamma_{l+1}^2}\left|
                 \begin{array}{ccc}
                   \gamma_{l+1} & \gamma_{l+2} \\
                   \gamma_{l+2} & \gamma_{l+3} \\
                 \end{array}
               \right|^2}\right] \geq  0.
  $$
This yields
\begin{equation}\label{matrix4}
[M_{\gamma'}]_{l}^2 \textrm{ is positive semidefinite if and only if
}0 \leq t \leq \frac{-\gamma_{l} \left|
                 \begin{array}{ccc}
                   \gamma_{l+2} & \gamma_{l+3} \\
                   \gamma_{l+3} & \gamma_{l+4} \\
                 \end{array}
               \right|}{\left|
                 \begin{array}{ccc}
                   0 & \gamma_{l+1} & \gamma_{l+2} \\
                   \gamma_{l+1} & \gamma_{l+2} & \gamma_{l+3} \\
                   \gamma_{l+2} & \gamma_{l+3} & \gamma_{l+4} \\
                 \end{array}
               \right|}.
\end{equation}
\end{description}
Finally from \eqref{matrix1}, \eqref{matrix2}, \eqref{matrix3} and
\eqref{matrix4} we conclude that,
$$
I^2=\left[\max\left\{\alpha(P),\alpha(Q),\frac{\gamma_l^2}{\gamma_{l+1}\gamma_{l-1}}\right\};
\min\left\{\beta(P),\beta(Q), \frac{-\gamma_{l} \left|
                 \begin{array}{ccc}
                   \gamma_{l+2} & \gamma_{l+3} \\
                   \gamma_{l+3} & \gamma_{l+4} \\
                 \end{array}
               \right|}{\left|
                 \begin{array}{ccc}
                   0 & \gamma_{l+1} & \gamma_{l+2} \\
                   \gamma_{l+1} & \gamma_{l+2} & \gamma_{l+3} \\
                   \gamma_{l+2} & \gamma_{l+3} & \gamma_{l+4} \\
                 \end{array}
               \right|}\right\} \right].
$$
\begin{remarks} \qquad

$\bullet $ The existence of positive roots for $P$ and $Q$ can be
derived also by using the classical Bolzano's theorem because the
fact that $P(1)\geq 0$, $Q(1)\geq 0$, and $P(0)\leq 0$, $Q(0)\leq
0$, and that $\lim_{t\to \infty} P(t)=\lim_{t\to \infty}
Q(t)=-\infty$.

$ \bullet $ A  direct calculation of discriminants will give  the
next inequality,
$$
\min\{\gamma_{l-1}^2\gamma_{l+1}\gamma_{l+3};\gamma_{l-2}\gamma_l\gamma_{l+2}^2\}\geq
(2\gamma_l\gamma_{l+1}-\gamma_{l-1}\gamma_{l+2})^2.
$$
\end{remarks}
\subsection{One rank perturbation of weighted shifts} Let  $l\in\mathbb{N}$  be given,  $t\geq $ and let $W_\alpha$ be a weighted shift. The $(l,t)$-one perturbation of $W_\alpha$   is the weighted shift $W_{\alpha(l,t)}$  
 defined by: $W_{\alpha(l,t)}(e_k)=W_\alpha(e_k)=\alpha_k e_{k+1}$
for $k\neq l$ and $W_{\alpha(l,t)}(e_l)=\sqrt{t}\alpha_l e_{l+1}$.

We denote by $\gamma$ the moment  sequence associated with  $\alpha$
defined by $\gamma_0=1$ and $\gamma_n=\alpha_{n-1}^2\gamma_{n-1}$
and by $\gamma'(t)$ (Or simply $\gamma'$) the moment sequence
associated with $\alpha(l,t)$. It is   easily seen that
$\gamma'_k=\gamma_k$ for $k\leq l$ and $\gamma'_k=t\gamma_k$ for
$k>l$.

We put $J^k=\left\{t\geq 0 ; W_{\alpha(l,t)} \textrm{ is
$k$-hyponrmal } \right\}$. Since $W_\alpha$ is $k$-hyponormal if and
only if $M_\gamma$ is $k$-positive. we can deduce that,
$$
J^k=\left\{t\geq 0 ; W_{\alpha(l,t)} \textrm{ is $k$-hyponrmal}
\right\}=\left\{t\geq 0 ; M_{\gamma'} \textrm{ is $k$-positive}
\right\}=I^k.
$$

 and then from Proposition \ref{2.}, we get $I^k$ is a
compact interval.

The behavior of one rank  perturbation of subnormal weighted shift
is given by

\begin{theorem}[{\cite[Theorem 2.1]{Cur2}}]
Let $W_\alpha$ be a subnormal weighted shift. We have
$$I^\infty:=\cap_{{k\geq 1}}I^k=\{1\}.$$
\end{theorem}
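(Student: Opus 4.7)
The plan is to use Berger's theorem to translate the hypothesis $t \in I^\infty$ into a statement about representing measures and then show that the matching of low-order moments leaves no freedom besides $t = 1$. The inclusion $\{1\} \subseteq I^\infty$ is trivial, since $W_{\alpha(l,1)} = W_\alpha$ is subnormal and hence $k$-hyponormal for every $k$. For the reverse inclusion, fix $t \in I^\infty$. Because $W_{\alpha(l,t)}$ is $k$-hyponormal for every $k$, the Bram--Halmos criterion \eqref{1.1}--\eqref{1.2} implies that $W_{\alpha(l,t)}$ is subnormal. Applying Berger's theorem to both shifts, I would produce compactly supported probability measures $\mu$ and $\mu_t$ on $\mathbb{R}_+$ representing, respectively, the moment sequence $\gamma$ of $W_\alpha$ and the moment sequence $\gamma'$ of $W_{\alpha(l,t)}$, where $\gamma'_n = \gamma_n$ for $n \le l$ and $\gamma'_n = t\gamma_n$ for $n \ge l+1$.

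The key step is to analyze the compactly supported signed measure $\rho := \mu_t - t\mu$. Its moments are $\int s^n\, d\rho = (1-t)\gamma_n$ for $n \le l$ and $\int s^n\, d\rho = 0$ for $n \ge l+1$. Rewriting the second batch as $\int s^{l+1} p(s)\, d\rho = 0$ for every polynomial $p$ and invoking the Stone--Weierstrass density of polynomials in $C(\mathrm{Supp}(\rho))$, I would obtain $\int s^{l+1} f(s)\, d\rho = 0$ for every continuous $f$. This forces the signed measure $s^{l+1}\, d\rho$ to vanish on $\mathbb{R}_+$, and a short Hahn-decomposition argument (or a direct restriction to each $[\varepsilon,\infty)$) then shows that $\rho$ must be concentrated at the origin; that is, $\rho = c\,\delta_0$ for some $c \in \mathbb{R}$, equivalently $\mu_t = t\mu + c\,\delta_0$.

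To finish, the $n = 0$ condition $\gamma'_0 = 1 = \gamma_0$ fixes $c = 1-t$, and, assuming $l \ge 1$, the condition $\gamma'_1 = \gamma_1$ becomes $t\gamma_1 = \gamma_1$; since $\gamma_1 = \alpha_0^2 > 0$ this forces $t = 1$. The main obstacle I anticipate is the density step: compactness of $\mathrm{Supp}(\rho)$, guaranteed by Berger's theorem, is essential for Stone--Weierstrass to apply. A complementary subtlety is that the positivity of $\mu_t$ alone would permit a full one-parameter family of candidates of the form $t\mu + c\,\delta_0$, so the actual bite of the conclusion comes from the matching of the \emph{lower} moments $\gamma'_n = \gamma_n$ for $1 \le n \le l$, which is what rules out every $t \neq 1$.
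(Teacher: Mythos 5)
Your proof is correct. Note first that the paper itself offers no proof of this statement: it is quoted from Curto--Lee \cite{Cur2}, so there is no internal argument to compare against. Your route --- pass to Berger measures $\mu$ and $\mu_t$, form the compactly supported signed measure $\rho=\mu_t-t\mu$ whose moments vanish from order $l+1$ on, use Stone--Weierstrass on the compact support to conclude that $s^{l+1}\,d\rho=0$, and then a Jordan-decomposition argument (two mutually singular positive measures that agree on $[\varepsilon,\infty)$ must both vanish there) to get $\rho=c\,\delta_0$ --- is sound, and reading off $t=1$ from the first moment finishes it. The one hypothesis you must make explicit, and correctly do, is $l\ge 1$: the paper's Section~6 takes $l\in\mathbb{N}^*$, and this is genuinely needed, since for $l=0$ the measure $t\mu+(1-t)\delta_0$ is positive for every $t\in(0,1]$, so $I^\infty\supseteq(0,1]$ and the conclusion fails. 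Your argument is a clean, self-contained replacement for the citation, in the same spirit as the original one in \cite{Cur2} (which compares Berger measures of the shift and of its restriction to $\bigvee\{e_n: n>l\}$); what your version buys is that it only uses Berger's theorem and elementary measure theory, with no appeal to the machinery of restrictions of subnormal shifts.
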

We state our main result in this section as follows.
\begin{theorem}
Let $W_\alpha$ be a $k$-hyponormal weighted shift. We have
$$
1 \in \overset{\circ}{\widehat{I^k}}\qquad \Longleftrightarrow
\qquad [M_\gamma]^n_k \textrm{ is positive definite } \forall n\leq
l.
$$
\end{theorem}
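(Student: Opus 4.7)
The plan is to reduce the theorem to a per-index statement, via the decomposition $I^k = \bigcap_{n \le l} I_n^k$ established in the preceding section. Since this is a finite intersection of closed intervals each containing $1$, the point $1$ lies in the interior of $I^k$ if and only if it lies in the interior of every $I_n^k$. It therefore suffices to prove, for each $n \le l$, that $1 \in \overset{\circ}{I^k_n}$ is equivalent to $[M_\gamma]^n_k$ being positive definite.

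The backward implication is short: if $[M_\gamma]^n_k$ is positive definite then the affine map $t \mapsto [M_{\gamma'}(t)]^n_k = t[M_\gamma]^n_k + (1-t)H^n_k(l)$ equals a positive definite matrix at $t=1$, and continuity of the spectrum yields some $\epsilon_n > 0$ on which the perturbation remains positive semi-definite. The minimum of $\epsilon_n$ over the finite set of relevant indices places $1$ in the interior of $I^k$.

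The forward implication is delicate, and I would argue by contrapositive. Suppose $A := [M_\gamma]^{n_0}_k$ is positive semi-definite but singular for some $n_0 \le l$, set $B := H^{n_0}_k(l)$, and assume for contradiction that $1 \in \overset{\circ}{I^k_{n_0}}$. For any $x \in \ker A \setminus \{0\}$ one computes $\langle [M_{\gamma'}(t)]^{n_0}_k x, x\rangle = (1-t)\langle Bx, x\rangle$, which must stay nonnegative on both sides of $t=1$ and therefore forces $\langle Bx, x\rangle = 0$. The principle that a positive semi-definite form annihilates any vector on which it vanishes, applied to $[M_{\gamma'}(t)]^{n_0}_k$ at any $t \ne 1$ in the neighborhood, then yields $Bx = 0$; hence $\ker A \subseteq \ker B$.

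The principal technical hurdle is to turn $Ax = 0$ together with $Bx = 0$ into a contradiction. My plan is to exploit the upper anti-triangular structure of $B$: because its entries $\gamma_{n_0+i+j}$ on the preserved anti-diagonals $i+j \le l-n_0$ are strictly positive and the remaining entries vanish, reading the equations $(Bx)_i = 0$ successively for $i = l-n_0, l-n_0-1, \ldots, 0$ forces $x_0 = x_1 = \cdots = x_{l-n_0} = 0$. Combined with $Ax=0$, the remaining coordinates $(x_{l-n_0+1},\ldots,x_k)$ lie in the kernel of a smaller sub-Hankel matrix of the form $[M_\gamma]^{l+1}_{p}$; then Theorem \ref{main} propagates this singularity along the full sequence of Hankel sub-matrices, and a careful index bookkeeping --- together with the strict positivity of the moments $\gamma_n > 0$ arising from the positive weights of $W_\alpha$ --- yields the required contradiction. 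Keeping the propagated rank drops in step with the level $k$ at which $A$ itself is singular is the main source of difficulty.
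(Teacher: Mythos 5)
Your reduction to a single index via $I^k=\bigcap_{n\le l}I^k_n$ and your proof of the backward implication coincide with the paper's argument (Proposition \ref{2} together with openness of the cone of positive definite matrices). Your first move in the forward direction is also sound, and is a genuinely different idea from the paper's: from $1$ interior to $I^k_{n_0}$ and $x\in\ker[M_\gamma]^{n_0}_k$ you correctly deduce $\langle H^{n_0}_k(l)x,x\rangle=0$ and then $H^{n_0}_k(l)x=0$. But the proof stops exactly where the real work begins, and the plan you sketch for finishing has two concrete problems. First, the anti-triangular back-substitution forcing $x_0=\cdots=x_{l-n_0}=0$ only works when $l-n_0\le k$: the last nonvanishing row of $H^{n_0}_k(l)$ is row $\min(k,\,l-n_0)$, and when $l-n_0>k$ that row retains the several entries $\gamma_{n_0+k},\dots,\gamma_{\min(n_0+2k,l)}$, so the successive elimination never gets started (in the extreme case $n_0+2k\le l$ one has $H^{n_0}_k(l)=[M_\gamma]^{n_0}_k$ and $Bx=0$ carries no information beyond $Ax=0$). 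Second, and more importantly, even where the elimination works, propagating the resulting singularity of the smaller block $[M_\gamma]^{l+1}_{k-(l-n_0)-1}$ via Theorem \ref{main} cannot by itself yield a contradiction: recursively generated subnormal shifts satisfy exactly these vanishing determinants and are perfectly consistent objects. The contradiction has to come from re-using the hypothesis that $1$ is interior, namely by showing that the propagated singularity forces $I^k=\{1\}$. That step requires observing that for every $t\in I^k$ the perturbed shift inherits a vanishing minor at level $p<k$ (since $[M_{\gamma'}]^n_p=t[M_\gamma]^n_p$ for $n\ge l+1$), hence is subnormal by Theorem \ref{hyp sub}, and then invoking the Curto--Lee theorem $I^\infty=\{1\}$. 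None of this appears in your outline, and it is the heart of the matter.

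For comparison, the paper's necessity argument avoids kernel vectors altogether: it takes $p\le k$ minimal with $\det([M_\gamma]^{n_0}_p)=0$ and splits into two cases. For $p<k$ it uses exactly the propagation--subnormality--$I^\infty=\{1\}$ chain described above; for $p=k$ it uses the cofactor identity $\det\bigl(t[M_\gamma]^{n_0}_k+(1-t)H^{n_0}_k(l)\bigr)=\gamma_{n_0}\,Cof(\gamma_{n_0})(1-t)t^k$, which forces $t\le 1$ on $I^k$ and hence excludes $1$ from the interior. Your kernel argument could in principle replace the $p=k$ branch, but as written it is a plan with unresolved cases rather than a proof.
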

We will use the same notations as in the proof of Proposition
\ref{2}. We start by  proving  that our condition is sufficient. To
this aim, consider
$$
\begin{array}{cccl}
  \gamma_n^k : & \mathbb{R}_+ & \longrightarrow & D \\
   & t &\longmapsto & t [M_\gamma]^n_k+(1-t) H_k^n(l)
\end{array}.
$$
Since $[M_\gamma]^n_k\in \overset{\circ}{\widehat{P^+_k}}$( the set
of all positive definite  matrices) and  $\gamma_n^k$ is continuous,
for $V$ an  open neighborhood of $[M_\gamma]_k^n$ such that
$V\subset \overset{\circ}{\widehat{P^+_k}}$, we get 
$(\gamma_n^k)^{-1}(V\cap D)$ is an open neighborhood of
$(\gamma_n^k)^{-1}([M_\gamma]_k^n)=1$. We deduce that there exists
$]r_n,t_n[\subset \mathbb{R}^+ $, such that $1\in ]r_n,t_n[$ and
$\gamma_n^k(]r_n,t_n[)\subset P^+_k$, and then $]r_n,t_n[\subset
I_n^k$. Finally
$$ 1\in ]\max_{n\leq l} r_n , \inf_{n\leq l} t_n[ \subset
\overset{\circ}{\widehat{I^k}}.$$

Conversely, we assume that there is  $n\leq l$ such that
$[M_\gamma]_k^n$ is not positive definite, or equivalently, there is
$p\leq k$ such that $\det([M_\gamma]^n_p)=0$. We distinguish two
cases;
\begin{description}
  \item [$p<k$] By Theorem \ref{hyp sub} $W_\alpha$ is subnormal,  then $I^k=I^\infty=\{1\}$ that means  $\overset{\circ}{\widehat{I^k}}=\emptyset$.
  \item [$p=k$] Simple computations  give
  $$
\det(\gamma_k^n(t))=t^{k+1}\det([M_\gamma]_k^n)+(1-t)t^k \gamma_n
Cof(\gamma_n)=\gamma_n Cof(\gamma_n) (1-t)t^k, 
  $$
where $Cof(\gamma_n)$ stands for the $(1,1)$ cofactor of
$[M_\gamma]_k^n$.  It follows from $t\in I^k$ that
$\det(\gamma_k^n(t))\geq 0$  and hence $t\leq 1$. Thus
$1\not\in\overset{\circ}{\widehat{I^k}}$.
\end{description}

\bigskip
\textbf{Acknowledgments.} The first  and the last  author are supported by the Project URAC 03
of the National center of research and by  the Hassan II academy of
sciences.  The first author is partially supported by the CNRST
grant $24UM5R2015$, Morocco.

\bibliographystyle{amsplain}

\end{document}